\def\arx{1} 
\begin{document}

\begin{frontmatter}

\title{\ourtitle}

\author[inst1]{Eric Gottlieb }
\ead{gottlieb@rhodes.edu}
\affiliation[inst1]{organization={Rhodes College},
            state={Tennessee},
            country={ U.S.A.}}

\author[inst2]{Matjaž Krnc}
\ead{matjaz.krnc@upr.si}
\author[inst2]{Peter Muršič}
\ead{peter.mursic@famnit.upr.si}

\affiliation[inst2]{organization={Faculty of Mathematics, Natural Sciences and Information Technologies},
            city={Koper},
            country={Slovenia}}


\begin{abstract}

Given an integer partition  of $n$, we consider the impartial combinatorial game LCTR in which moves consist of removing either the left column or top row of its \Young. 
We show that for both normal and misère play, the optimal strategy can consist mostly of mirroring the opponent's moves. 
We also establish that both LCTR and \King{} are domestic as well as returnable, and on the other hand neither tame nor forced.

For both games, those structural observations allow for computing the Sprague-Grundy value any position in $O(\log(n))$ time, assuming that the time unit allows for reading an integer, or performing a basic arithmetic operation.
This improves on the previously known bound of $O(n)$ due to Ilić (2019). 
We also cover some other complexity measures of both games, such as state-space complexity, and number of leaves and nodes in the corresponding game tree.
\end{abstract}


\ifthenelse{\arx=1}{}{
\begin{highlights}
\item We study the impartial combinatorial game LCTR in which moves consist of removing either the left column or top row of a \Young{}.

\item We show that for both normal and misère play, the optimal strategy can consist mostly of mirroring the opponent's moves. 

\item We establish that both LCTR and \King{} are domestic  as well as returnable, and on the other hand neither tame nor forced.

\item We describe an algorithm 
to compute the Sprague-Grundy value 
in $O(\log(n))$ time, and also cover some other complexity measures of both games, such as state-space complexity, and number of leaves and nodes in the corresponding game tree.

\end{highlights}
}

\begin{keyword}
combinatorial game \sep partition games \sep computational complexity \sep Sprague Grundy

\MSC[2020] 91A46 \sep 91A68

\end{keyword}

\end{frontmatter}

\section{Introduction and background}
Combinatorial game theory is a large and growing field that includes in its scope a wide range of game types, generally focusing on two-player games in which both players have perfect information and there are no moves of chance.
We restrict ourselves to two-player combinatorial games that end in finitely many moves.
Every game ends in one of the two possible outcomes: one of the players wins, while the other loses. 
Under \emph{normal play}, the winner is the player who makes the last move, leaving their opponent with no move. Under \emph{misère} rules, the win condition is reversed.

The main question in combinatorial game theory is, given some position and optimal play, which player has a winning strategy? An $\N$-position (winning position) is one in which the player whose turn it is can guarantee a win. 
The complementary positions are $\P$-positions (losing positions) in which the player that goes second can guarantee a win.
The game is effectively solved if we can classify all of its positions as either $\N$ or $\P$. 
Sprague \cite{Spr35,Spr37} and  Grundy \cite{Gru39} introduced a method of quantifying game positions for normal play \emph{impartial} games, i.e., those in which both players
have the same possible moves in each position. These \emph{Sprague-Grundy values} are a generalization of winning and losing positions. 
Furthermore, these values are instrumental in the theory of \emph{disjunctive sums of impartial games}.
This is why recent research on impartial games is predominantly focused on the Sprague-Grundy values rather than on $\P/\N$-positions.
For more information on combinatorial game theory, including impartial games and disjunctive sums, see \cite{berlekamp2017winning1,berlekamp2017winning2,berlekamp2017winning3,berlekamp2017winning4, Con76,Sie13}.

Over the last half century, a number of researchers including Dailly et al~\cite{DAILLY2020509}, Abuku and Tada~\cite{MR4556411}, Irie~\cite{irie2018p}, and Sato~\cite{sato1970maya} have investigated games on (integer) partitions. In this paper we study the impartial finite game introduced in \cite{Ilic2019}, where two players take turns removing from a given \Young{} $\lambda$ either leftmost column or topmost row, hence the name LCTR.
We show that for both normal and misère play of LCTR, the optimal strategy can consist mostly of mirroring the opponent's moves. 
In terms of structural classification of normal-misère game pairs as introduced in Gurvich et al. \cite{GURVICH201854} we
establish that both games are domestic as well as returnable, and on the other hand neither tame nor forced.

In \cite{Ilic2019}, the authors studied the behaviour of some specifically structured sub-families of LCTR games (see \cref{lem:ilicgama}) and devised a procedure deciding which player has the winning strategy on a given \Young{} and determining the Sprague-Grundy value associated to the \Young{} in $O(n)$ time units \cite{jelena_code}\footnote{Here time unit allows performing a basic arithmetic operation (see \cref{sec:timecomp}), while $n$ is defined  so that $\lambda$ is a partition of $n$.}.
For both games, our structural insights allow for computing the Sprague-Grundy value any position in $O(\log(n))$ time  units.
We also cover some other complexity measures of both games, such as state-space complexity, and number of leaves and nodes in the corresponding game tree.

In \cref{sec:prelims} we cover the basic definitions needed for the study of LCTR and related games. 
In \cref{sec:LCTRKING} we establish the properties that are common to both normal and misère play of LCTR.
In \cref{sec:LCTR misere,sec:LCTR} 
we take the study of the normal form of LCTR, as well as the normal form of a game called \emph{\King{}} that is closely related to the misère form of LCTR, and show that for both games, the optimal strategy can consist mostly of mirroring the opponent's moves.
In \cref{sec:hierarchy} we establish a structural classification of both LCTR and \King{} within the  framework of  Gurvich et al. 
\Cref{sec:computing} focuses on computing the exact Sprague-Grundy values for both games in $O(\log n)$ time units.
In the same section we also cover some other complexity measures of both games, such as state-space complexity, and number of leaves and nodes in the corresponding game tree.
Finally, we discuss future research directions as well as some related open problems in \cref{sec:open}.

\section{Preliminaries}\label{sec:prelims}

In this section, we provide background information on partitions and \Youngs{}, impartial games, Sprague-Grundy theory, and complexity theory.

\subsection{\Youngs{} and Partitions}\label{sec:partitions}

A \emph{partition} of a nonnegative integer $n$ is a sequence $\lambda = \br{\lambda_1, \dots, \lambda_r}$ of integers, where $\lambda_1 \geq \cdots \geq \lambda_r > 0$ and $\lambda_1 + \cdots + \lambda_r = n$. 
We also write $\lambda \vdash n$ and we say that $\lambda$ is a partition of $n$. 
The $\lambda_j$'s are called the \emph{parts} of $\lambda$. When some of the parts of a partition are equal, we may abbreviate $\br{\lambda_1^{m_1}, \dots, \lambda_k^{m_k}}$, where $\lambda_j$ appears $m_j\ge0$ times. 
The \emph{\Young{}} of $\lambda$ is a left-justified array of boxes in which the $j$th row from the top contains $\lambda_j$ boxes. 
Partitions are drawn as \Youngs{}%
, and no distinction is made between a partition and its \Young{}. 
The \Young{} of $\br{6, 4, 3, 3, 1, 1} = \br{6, 4, 3^2, 1^2}$ is shown in \cref{fig:ferrer}. 
The only partition of 0 is the empty sum, which we write as $\br{}$. %
 We define the conjugate partition of $\lambda$ as $\lambda'=\br{\lambda'_1,\ldots,\lambda'_{\lambda_1}}$ 
where $\lambda'_i$ is the largest integer such that $\lambda_{\lambda'_i}\geq i$.

\begin{figure}[h]
\centering
\ericferrer
\caption{The \Young{} of $\br{6, 4^2, 2, 1^2}$ }\label{fig:ferrer}
\end{figure}

If $i$ and $j$ are such that $\lambda_i > 0$ and $0<j \leq \lambda_i$ then the box in the $i$th row and $j$th column of the \Young{} is called the $(i, j)$th box of the \Young{}.
The \emph{subpartition} of $\lambda$ corresponding to the \Young{} rooted at the $(i, j)$th box of $\lambda$ is $\lambda[i-1, j-1] = \br{\lambda_i - j+1, \lambda_{i+1} - j+1, \dots}$, where trailing nonpositive entries are removed.
We define a \emph{corner} of a partition $\lambda$ to be a pair $(i, j)$ such that $\lambda[i,j] = \br{1}$. If $(i, j)$ is a corner of $\lambda$ then we define $\lambda[i, j]$ to be a \emph{corner subpartition} of $\lambda$. %
If $i$ and $j$ are positive integers not satisfying the above conditions, then we define  $\lambda[i-1,j-1]$ to be the empty partition $\br{}$.
Thus $\lambda[0, 0] = \lambda$ for any partition.
A key property of the subpartitions is that for all non-negative integers $i_1,i_2,j_1,j_2$ we have the identity  
\begin{align}\label{eq:chainsubpartition}
(\lambda[i_1,j_1])[i_2,j_2]=\lambda[i_1+i_2,j_1+j_2].
\end{align}
The \emph{Durfee length} of partition $\lambda=\br{\lambda_1,\lambda_2,\dots}$ is defined as
$
\D(\lambda)=\max \{\ell \mid \lambda_\ell\ge \ell \}
$.
Let $r$ and $c$ be positive integers. 
The \emph{staircase}, \emph{rectangle}, and \emph{gamma} families of partitions are defined as 
\begin{align*}
    \staircase_r&=\br{r,r-1,r-2,\ldots, 1}&
    \rectangle_{r,c}&= \br{c^r}&
    \gama_{r,c}&= \br{c, 1^{r-1}},
\end{align*}
respectively.

\begin{figure}
    \centering
 \picturefamilies
    \caption{%
    Partitions corresponding to $\protect\staircase_{6}$ and  $\protect\rectangle_{6,4}$ and $\protect\gama_{6,4}$
    }
    \label{fig:symgames}
\end{figure}

\subsection{Impartial games and Sprague-Grundy values}
\label{sec:imgames}\label{def:PN-recursion}

In this paper our focus will be exclusively on impartial finite combinatorial games, which we will refer to simply as \emph{games}.
The positions in the games we study are partitions or \Youngs{}, which we describe in the following subsection. 
A \emph{subposition} of a game is a position that can be reached after a nonnegative number of %
moves. Every game begins in some position, so we sometimes use the terms interchangeably. 

A \emph{terminal position} is a position from which no further moves are possible. 
Under normal play of a combinatorial game, a player who reaches a terminal position is the winner. 
Under misère play, a player who reaches a terminal position is the loser. 
Throughout this paper, we assume normal play unless otherwise indicated.

Let $G$ be a game under normal (misère) play. 
We define the set of its winning positions $\N_G$, 
and the set of its losing positions $\P_G$, as follows:
\begin{enumerate}
    \item Terminal positions are in $\P_G$ ($\N_G$). \label{def:PN-recursion1}
    \item A position is in $\P_G$, if none of its moves leads to a position in $\P_G$.
    \label{def:PN-recursion2}
    \item A position is in $\N_G$, if at least one of its moves leads to a position in $\P_G$.
    \label{def:PN-recursion3}
\end{enumerate}
We denote the above sets simply as $\P$ and $\N$, whenever the game in question is clear from the context.

A two player combinatorial game is \emph{impartial} if both players have the same possible moves in each position.
Let $A$ be a position of such a game $G$. 
If there is a move from $A$ to position $B$, we write $A \to B$. 
For a finite set $S$ of nonnegative integers the {\it minimum excluded value} of $S$
is defined as the smallest nonnegative integer that is not in $S$ and
is denoted by  $\Mex (S)$. 
The \emph{Sprague-Grundy} value ($\G$ value) of $A$ is defined recursively by  
\begin{align}
\label{eq:main_SG}
\G(A) = \Mex (\{\G(B) \mid A \to B\}).
\end{align}

A position of $\G$ value $t$ is called a {\it $t$-position}. 
For the case of terminal positions we have $\Mex(\emptyset)=0$, hence such positions have $\G$ value $0$.

If $G_1$ and $G_2$ are games, their \emph{disjunctive sum} $G_1 + G_2$ is defined to be the game in which players in turn choose a summand ($G_1$ or $G_2$) and make a move in that game. Play continues until terminal positions are reached in both summands. It is well known that $\G(G_1) = \G(G_2)$ if and only if $G_1+G_2 \in \P$ (see \cite{Con76}). 

$\G$ values provide a useful mechanism for determining winning and losing positions for impartial games under normal play.
In particular, the $\P$-positions are precisely those that have $\G$ value $0$ while the $\N$-positions are those with nonzero $\G$ values.
The Sprague-Grundy  theorem \cite{Sie13} shows that given the sum of impartial games under normal play, we can replace every game with a pile of Nim, with the size corresponding to the $\G$ value of the game.
Doing so will not change the $\P/\N$-positions of the sum.

For normal play, one can partition the positions to equivalence classes with respect to their $\G$ value. 
For misère play such equivalence classes, for which we can replace one game with another within the sum, are different and far more difficult to characterize in general.
The equivalence classes for misère are called misère quotients and form a commutative monoid \cite{PLAMBECK2008593}. Hence while one can study misère quotients, one can also focus on just $\P/\N$-positions.

We make use of the following corollary, which follows from the definition of $\G$ value. For detailed coverage of combinatorial game theory we refer the reader to \cite{Sie13}.
\begin{corollary}
\label{cor:adjacentMex}
Let $A,B$ be positions such that $A\to B$.
Then $\G(A)\neq \G(B)$.
\end{corollary}

\subsection{Tame, pet, domestic, and miserable impartial games}

As early as in 1956 Grundy and Smith  \cite{grundy_smith_1956}  noticed that
playing a game under the mis\`{e}re convention may be difficult in general.
In \cite{GURVICH201854} Gurvich and Ho focus on the exceptions, that is, on the games
for which functions  $\G$ and $\G^-$ are closely related.

The mis\`{e}re $\SG$ value $\G^-(x)$  of a position $x$ in a game  $G$
is defined by the same recursion (\ref{eq:main_SG}), but the initialization is different:
$\G^-(x) = 1$  (rather than $\G^-(x) = 0$) for all terminal positions.

A position $x$  will be called an $(i,j)$-position if  $\G(x) = i$  and  $\G^-(x) = j$. 
We say that $(\G(x),\G^-(x))$ is a  Sprague-Grundy pair of $x$.
By definition, every terminal position is a $(0,1)$-position.

\begin{definition} \label{D.DTP}
An impartial game will be called
\begin{enumerate} [(i)] 
\item {\em domestic} if it has neither $(0,k)$-positions nor $(k,0)$-positions with $k \geq 2$;
\item {\em tame} if it has only $(0,1)$-positions, $(1,0)$-positions, and $(k,k)$-positions with $k \geq 0$;
\item {\em pet} if it has only $(0,1)$-positions, $(1,0)$-positions, and $(k,k)$-positions with $k \geq 2$.
\item {\em forced} if each move from a $(0,1)$-position results in a $(1,0)$-position and vice versa;
\item {\em returnable} if the following, weaker, implications hold:
let $x$ be a $(0,1)$-position (resp.,~a $(1,0)$-position) movable to a non-terminal position $y$, then
$y$ is movable to a $(0,1)$-position (resp.,~to a $(1,0)$-position).
\end{enumerate}
\end{definition}

Tame games were introduced by Conway~\cite[Chapter 12]{Con76}
(see page 178), pet games were introduced recently by Gurvich et al.$\!$ in the preprints \cite{Gur11, Gur120},
while domestic games are introduced by Gurvich et al.$\!$~\cite{GURVICH201854}.
According to the above definitions, domestic, tame, pet, returnable and forced games form nested classes:
a pet game is tame, a tame game is domestic and a forced game is returnable; see \cref{fig:picturegurvich}.

\begin{figure}
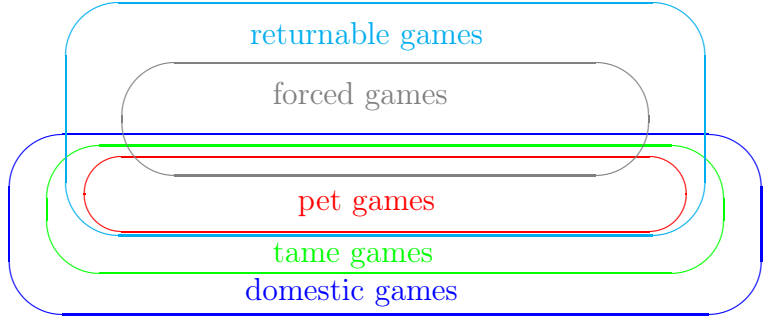

    \centering
    \picturegurvich
    \caption{Hierarchy of impartial games as shown in \cite{GURVICH201854}}
\label{fig:picturegurvich}
\end{figure}

\subsection{Complexity theory}

We provide some technical definitions regarding complexity in the context of impartial games that will be used later. %
Let $G$ be an impartial game and let $p$ be a fixed starting position, which we sometimes refer to as $G(p)$. 
The corresponding \emph{game tree} $T_p$ is a rooted tree obtained by the following procedure.
\begin{itemize}
    \item The original position $p$ is the root of $T_p$. 
    \item For each subposition $\tilde p$ 
    with  $p\to \tilde p$,
    there is a directed edge from $p$ to the root of $T_{\tilde p}$. 
\end{itemize} 
Note that multiple copies of the same position may occur in the game tree. 
These are treated as distinct nodes. 
We distinguish several parameters related to $G$ and $p$:
\begin{enumerate}
    \item The number of leaves of the game tree $T_p$ is denoted by $\leaf(T_p)$, while the number of nodes of the game tree $T_p$ is denoted by $\nodes(T_p)$. 
    We extend those notations by defining $\leaf(G(p))=\leaf(T_p)$, and similarly $\nodes(G(p))=\nodes(T_p)$.

    \item The \emph{height} of $T_p$ is the length of the longest path from $p$.
    Provided that $T_p$ has finite height, the height of any proper subtree of $T_p$ is at least one less than the height of $T_p$.

    \item 
    We say that subpositions $\tilde p$ and $\bar p$ of $p$ are \emph{distinct} if $T_{\tilde p}$ is not isomorphic to $T_{\bar p}$.
    The \emph{state-space complexity} of a game is the number of pairwise distinct game positions reachable from the initial position $p$ of the game.
\end{enumerate}

A game $G$ is said to be \emph{finite} if $T_p$ has finite height for any position $p$ of $G$.
The \emph{computational complexity} of a game is the time required to decide whether a given initial position is winning.

\section{LCTR and \King{}}\label{sec:LCTRKING}

We now describe the core objects of this paper, the games LCTR and \King{}. After providing definitions in \cref{sec:lctr,sec:kingintro}, we mention several important properties of both games in \cref{sec:obsLCTRking}.

\subsection{LCTR}\label{sec:lctr}
A position in the \emph{leftmost column---topmost row}, or LCTR game (see \cite{Ilic2019}), is denoted by $\L(\lambda)$, where $\lambda$ is a partition, i.e., a non-increasing list of $r$ positive integers $\lambda=\br{\lambda_1, \lambda_2, \ldots, \lambda_r}$ with $\lambda_1\geq \lambda_2 \geq \ldots \geq \lambda_r$. 

The \emph{ terminal position} occurs when $r = 0$, which we denote by $\L(\br{})$.
Let $A\neq \L(\br{})$ be a non-terminal position.
There are two possible moves a player can make, leading to a new position $\tilde A$, namely:
\begin{itemize}
    \item $\tilde A=\L(\lambda[1,0])$, or
    \item $\tilde A=\L(\lambda[0,1])$.
\end{itemize}

Given two non-negative integers 
$i < r$ and $j\le \lambda_{i+1}$, or $(i,j)=(r,0)$,
we denote the subposition reached from $A$ by $i$ top row moves and $j$ left column moves as $A[i,j]$, formally 
$A[i,j]=\L(\lambda[i,j]).$

\subsection{\King{}}\label{sec:kingintro}
We now introduce a new game named \emph{\King{}}, %
 which is simpler than LCTR and is a good introduction to the study of LCTR.
Normal play in \King{} is equivalent to misère play in LCTR in a way that is made more precise in \cref{sec:LCTR misere}. 
In \King{}, a ``restricted'' rook chess piece begins in the top-left box of a \Young{}. 
The rook can move a single adjacent box down or right. 
Under normal play, then, the player who gets the rook to a corner, where no moves remain, is the winner. 
There are similarities with Impartial chess played with a King \cite{impchess,impchess1}, but in \King{} we cannot move diagonally and the board is a (not necessarily rectangular) partition.

Let us define the game of \King{} $\K(\lambda)$, for a given partition $\lambda=\br{\lambda_1, \lambda_2, \ldots, \lambda_r}$. 
There are at most two moves a player can make. 
In particular, the move resulting in the subposition $\K(\lambda[1,0])$ is allowed whenever $r>1$, the move resulting in subposition $\K(\lambda[0,1])$ is allowed whenever $\lambda_1>1$.
Given two non-negative integers 
$i<r$ and $j<\lambda_{i+1}$,
we denote the subposition reached from $\B$ by the rook moving $j$ boxes to the right and $i$ boxes down as $\B[i,j]$, formally 
 $\B[i,j]=\K(\lambda[i,j])$. 

\begin{figure}[h]
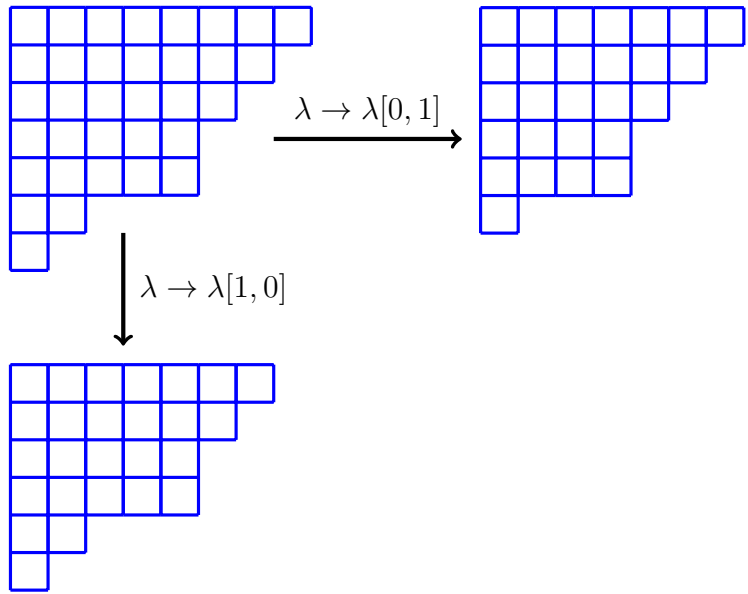

\begin{center}
\picturemex
\caption{The partition $\lambda=\br{8,7,6,5^2,2,1}$ along with two subpartitions $\lambda[1,0]=\br{7,6,5^2,2,1}$ and  $\lambda[0,1]=\br{7, 6, 5, 4^2, 1}$ that correspond to moves in LCTR or \King{}.\label{fig:MexRecursion}
}
\end{center}
\end{figure}

\subsection{Observations regarding LCTR and \King{}}\label{sec:obsLCTRking}

Let $\lambda$ be a partition, let $i,j$ be nonnegative integers such that $\lambda[i,j]$ is defined and non-empty.
Within the game tree of either $\L(\lambda)$ or $\K(\lambda)$, a position corresponding to $\lambda[i,j]$ occurs several times%
\footnote{There are exactly $\binom{i+j}{j}$ many ways to make $i$ down moves and $j$ right moves to get to $\lambda[i,j]$.
However, this partition may be equal to $\lambda[i', j']$ with $i \neq i'$ or $j\neq j'$, so the best we can say is that the partition $\lambda[i,j]$ occurs at least $\binom{i+j}{j}$ times.  
} but its $\G$ value need only be computed once. 
Furthermore, in either game we can capture the $\G$ values of every position by only storing the values $\G(A[i,j])$. 
To show this graphically we write the value $\G(A[i,j])$ inside the box of the \Young{} for $A$, located on coordinates $(i+1)$-th row and $(j+1)$-th column (see e.g. \cref{fig:SGvalues}).

\begin{figure}[h]
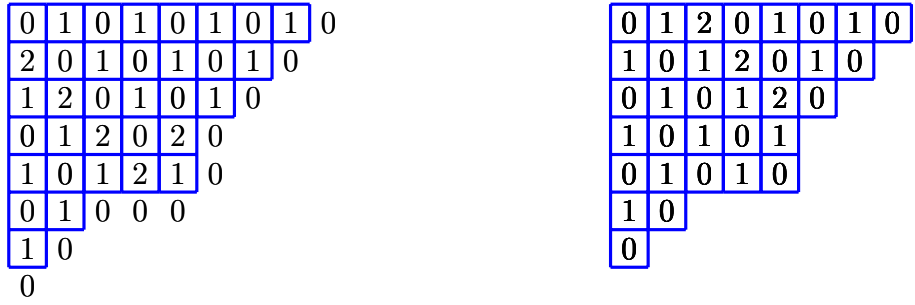

    \centering
\begin{center}
\picturepropagate  
\end{center}
    \caption{$\G$ values for subpositions in games of LCTR (left) and \King{} (right), played on $\lambda=\br{8,7,6,5^2,2,1}$. The zeros that appear outside of the Young diagram for $\L(\lambda)$ correspond to terminal positions. }
    \label{fig:SGvalues}
\end{figure}
Since each position admits  
at most two moves, we simplify the notation for the minimum excluded value and also use $\mex{a}{b}$ instead of $\Mex(\{a,b\})$.
According to \cref{eq:main_SG}, we can compute the $\G$ value of $A=\L(\lambda)$ as follows:
\begin{equation}
\G(A[i,j])=\begin{cases}
\mex{\G(A[i+1,j])}{\G(A[i,j+1])} & \text{if }A[i,j] \neq \br{}\\
0 & \text{otherwise}
\end{cases}.\label{eq:recursion}
\end{equation} 
Similarly, we can compute the $\G$ value of $A=\K(\lambda)$ as follows:
\begin{align}\label{eq:recursion2}
\G(A[i,j])=\begin{cases}
0 & \text{if } \lambda[i,j]=\br{1} \\
\Mex\{\G(A[i,j+1])\} & \text{if } \lambda[i,j]=\br{c} \text{ for }c\geq2 \\
\Mex\{\G(A[i+1,j])\} & \text{if } \lambda[i,j]=\br{1^r} \text{ for }r\geq2 \\
\mex{\G(A[i,j+1])}{\G(A[i,j+1])} & \text{otherwise}
\end{cases}.
\end{align}
See \cref{fig:MexRecursion} for an example.

We now describe $\G$ values for some families  of games introduced in \cref{sec:partitions}.

\begin{lemma}
For a positive integer $n$ we have that
$$\G(\L(\staircase_n)) = n \bmod 2\quad \text{and}\quad \G(\K(\staircase_n)) = (n-1) \bmod 2.$$
\end{lemma} 

\begin{proof}
Let $A_n=\G(\L(\staircase_n))$ or $A_n=\G(\K(\staircase_n))$.
Due to \cref{eq:recursion,eq:recursion2}, we have
\begin{align*}
    \G(A_n)&=\mex{\G(A_{n-1})}{\G(A_{n-1})}.
\end{align*}
Hence in both cases the $\SG$ values must alternate between 0 and 1 as $n$ increments by 1. 
The difference is that  $\br{}$ is a terminal position for LCTR and has $\G$ value $0$, while  $\staircase_1=\br{1}$ is a terminal position for \King{} with $\G(\K(\staircase_1))=0$. 
\end{proof}

\begin{lemma}\label{lem:ilicgama}
For positive integers $r$ and $c$ %
we have
\begin{align}
\label{eq:ilic}\G(\L(\gama_{r,c}))&=%
\begin{cases}
0 & c > 1 \mbox{ and } r > 1 \\
1 & r = 1 \mbox{ and } c \mbox{ is odd, or } c = 1 \mbox{ and } r \mbox{ is odd}  \\
2 & \text{otherwise}
\end{cases}
\end{align}
and
\begin{align}    
 \G(\K(\gama_{r,c}))&=
\begin{cases}
0 & \text{if both } c \text{ and } r \text{ are odd} \\
2 & \text{if they are of different parity and } c>1 \text{ and } r>1\\
1 & \text{otherwise} \\
\end{cases}.\label{eq:ilicking}
\end{align}

\end{lemma} 
\begin{proof}
As \cref{eq:ilic} is already shown in Ili\' c \cite{Ilic2019} we only consider the case of \King{}. 
If $c=1$ or $r=1$, then induction shows that $\G(A)$ is 0 if $cr$ is odd and 1 otherwise. 
Hence $\G(\B)$ and $cr$ are of distinct parity.
If $c>1$ and $r>1$, then by definition, 
\begin{equation*}
\G(\B)=\mex{\G(\B[1,0])}{\G(\B[0,1])}
=\begin{cases}
0 &\text{if } r\text{ and }c \text{ are odd} \\
1 &\text{if } r\text{ and }c \text{ are even} \\
2 &\text{otherwise}
\end{cases}.
\qedhere
\end{equation*}
\end{proof}

\begin{observation}\label{obs:max2}
Let $A$ be a position in any impartial game under normal play. Then $\G(A)$ does not exceed the number of allowable moves from position $A$.
\end{observation}

Due to \cref{eq:chainsubpartition},
every permissible sequence with the same number of left column and top row moves leads to the same position.

Let $G$ be any game on partitions, and suppose that for any partition $\lambda$ the subpositions reachable in one move from $\lambda$ are $\mu_1, \ldots, \mu_r$ if and only if the subpositions reachable in one move from $\lambda'$ are $\mu_1', \ldots, \mu_r'$. In this case, the game trees of $G$ on $\lambda$ and $\lambda'$ are obviously isomorphic, we say that $G$ is \emph{conjugation-invariant}, and the following lemma applies. 

\begin{lemma}\label{lem:transposeG}
Let $\lambda$ be a partition. 
Then in LCTR or \King{}, we have
\begin{enumerate}
    \item $\lambda'[i, j]=\lambda[j, i]'$ and also \label{it:transp1} 
    \item $\G(\L(\lambda'))=\G(\L(\lambda))$ and $\G(\K(\lambda'))=\G(\K(\lambda))$. \label{it:transp2}
\end{enumerate}
\end{lemma}

\section{Misère LCTR, truncation, and \King{}}\label{sec:LCTR misere}

As discussed in \cref{sec:imgames}, it is in general difficult to analyze misère play of a given combinatorial game.
Nevertheless, in this section we show that misère LCTR can be analyzed using  Sprague-Grundy theory.
In fact, \King{} is misère LCTR without its terminal positions. 
\begin{lemma}\label{lem:king=misereLCTR}
Let $\lambda\neq \br{}$ be a partition.
Then $\lambda$ corresponds to  a $\P$-position in misère LCTR if and only if it corresponds to a $\P$-position in \King{} (under normal play).
\end{lemma}
\begin{proof}

By the definition of $\P$ and $\N$ positions (\cref{def:PN-recursion}), all subpositions $\br{}$ in misère LCTR belong to $\N$ while all subpositions $\br{1}$ are in $\P$. 
For \King{}, we have that all corners  are in $\P$. Recall that empty partition $\br{}$ is not valid input partition for the game \King{}.

Now consider either of the games (misère LCTR or \King{}) played on a row $\br{c}$, $c>1$, and observe that such a game belongs to $\N$ if and only if the same game played on $(c-1)$ belongs to $\P$. 
The same arguments hold for the case of the column game, when either game is played on the partition $\br{1^r},r>1$. We conclude that the membership in $\N$ or $\P$ coincides for both games, whenever played on a column, or on a row partition.

Consider now an arbitrary partition $\lambda\neq \br{}$ and consider both games played on $\lambda$. 
We already established that in the cases of subpartitions $\br{c}$ or $\br{1^r}$, both belong to $\P$ or $\N$ in the same way.
Since 
the definition of $\P$ and $\N$  (\cref{def:PN-recursion})
computes membership of positions from both games recursively in the same way, this concludes the proof of the claim. 
\end{proof}

We now define and motivate the truncation of any impartial game. 

\begin{definition}[truncation] \label{def:truncation}
 Let  $G$ be  a finite impartial game starting on position $p$. 
 The \emph{truncation} of $G$ is an impartial game  $\norm G $ obtained from $G$ as follows:
 \begin{itemize}
    \item The starting position of $\norm G$ corresponds to $p$.
    \item $\trunc G$ has the same moves as $G$ except moves $A\to B$ where $B$ is a terminal position of $G$ no longer exist.
\end{itemize}
We will assume $\trunc G$ is played under normal play, unless stated otherwise.
\end{definition}
\cref{lem:king=misereLCTR} motivates the following theorem, from which it can be derived as a corollary.

\begin{theorem} \label{thm:truncation}

Let $G$ be an impartial misère game and let $p$ be a non-terminal position of $G$. Then $p$ is a $\P$-position of $G$ if and only if $p$ is a $\P$-position of $\trunc{G}$ under normal play. 
\end{theorem}

\begin{proof}
Let $G$ be a finite impartial misère game, let $H=\norm G$, and denote the sets of their $\P$-positions by $\P_G,\P_{H}$, respectively.
Note that the deleted $G$-terminal (i.e. terminal in $G$) positions are not $\P_G$, by definition of misère, which supports our claim. 
All the remaining positions exist in both $G$ and $H$. 
Since $G$ is a finite game, for every position $A$, all of its subpositions have smaller height than $A$.
Hence we can establish $\P_G=\P_{H}$ by induction on the height of the game tree of $G$.

First we show that the claim holds for all subpositions $A$ such that $A$ is of height $1$ in $G$. 
The only moves from $A$ are to $G$-terminal position which are all $\N_G$, thus $A\in \P_{G}$ follows.
Since $A$ is $H$-terminal and $H$ is under normal play, it holds trivially that $A\in \P_{H}$.

Now assume that the claim holds for all subpositions reachable from $A$. %
Let $\mathcal T$ denote the terminal positions to which we can move in $G$ from $A$, and let $\mathcal{B}$ correspond to the set of non-terminal positions reachable from $A$ in one move. 
Recall that $\mathcal{T} \subseteq \N_G$. 
We consider two cases depending on whether $A$ is $\P_G$ or $\N_G$. 
\begin{itemize}
    \item Let $A \in \P_G$. Then we have $\mathcal{B} \subseteq \N_G$ by 
    the definition of $\P$ and $\N$ positions (\cref{def:PN-recursion}).
    By induction hypothesis we have for all $\mathcal{B} \subseteq \N_{H}$. Since $A$ can only move to positions $\mathcal{B}$ in $H$, we have that $A \in \P_{H}$.
    
    \item Let $A \in \N_G$. Then there exists $B\in \mathcal{B}$ s.t. $B \in \P_G$ by the definition of $\P$ and $\N$. 
    By induction hypothesis we have that $B \in \P_{H}$. 
    Since position $A$ can move to position $B$ in $H$, we have that $A \in \N_{H}$.\qedhere
\end{itemize}
\end{proof}

The above theorem directly implies that 
$\N$-positions also coincide except on the deleted terminal positions of the original game, as they do not exist in the truncation.

While the functions $\SG(\trunc \cdot)$ and $\SG^-(\cdot)$ have the same support (with the exception of terminal positions), they may have different values within the support.

We find it noteworthy that  the truncation of LCTR is played on the same \Young{} under the rules of \King{}. 
Indeed, since the terminal positions of any LCTR game $G$ are “located” outside its \Young{}, removing them does not alter the diagram for $\norm G$. 

One might also consider the misère variant of \King{} in which case
its terminal positions are the corners of the corresponding \Young{}. 
Observe that for $\lambda \vdash n$, where $n \geq 2$, we have
\begin{equation} \label{eq:truncdr}
\trunc{\K(\lambda)} = \K(\lambda^-) \end{equation}
where $\lambda^-$ is obtained from $\lambda$ by removing all corners of $\lambda$. Repeated truncations of a game may give rise to a hierarchy of games. In the case of repeated truncation of \King{}, \cref{eq:truncdr} gives
$$\left \{ \text{trunc}^{i+1}(\K(\lambda)) : \lambda \vdash n, \ n \in \mathbb Z^+ \right \} \subsetneq \left\{ \text{trunc}^{i}(\K(\lambda)) : \lambda \vdash n, \ n \in \mathbb Z^+\right\}$$
for every positive integer $i$. 

\cref{lem:king=misereLCTR} further motivates the study of the $\G$ values of \King{}, which is the focus of this section.
We now show how $\G$ values propagate along diagonals, as seen in \cref{fig:SGvalues} (right).

Let $\lambda$ be a nonempty partition and let $\cor(\lambda) = \max\{i+j : (i, j) \mbox{ is a corner of } \lambda\}$. Observe that $\cor(\lambda) > \cor(\lambda[i,j])$ whenever $i > 0$ or $j > 0$. 

\begin{lemma}\label{propagatem}
Let $\lambda$ be a partition and let $i,j \geq 1$ be integers such that $\B=\K(\lambda)$ and $\B[i,j]$ are positions in \King{}. 
Then $\G(\B[i-1,j-1])=\G(\B[i,j])$.
\end{lemma} 

\begin{proof} Note that $\G(\B[i-1,j-1])=\G(\B[i,j])$ if and only if $A[i,j] + A[i-1,j-1] \in \P$. We proceed by induction on $\cor(\lambda[i,j])$. 

For the base step, suppose $\cor(\lambda[i, j])=0$. Then $A[i,j]$ is in $\P$, so $A[i-1, j]$ and $A[i, j-1]$ are in $\N$. Thus $A[i-1, j-1]$ is in $\P$ so $A[i, j] + A[i-1,j-1]$ is in $\P$. 

For the induction step, suppose that $\cor(\lambda[i,j]) > 0$. If the opponent moves in the $A[i-1,j-1]$ summand, we play in that summand again to obtain $A[i,j] + A[i,j]$ which is in $\P$. If the opponent moves in the $A[i,j]$ summand, then we mimic that move in the $A[i-1,j-1]$ summand to obtain either $A[i+1,j] + A[i,j-1]$ or $A[i,j+1] + A[i-1,j]$. The induction hypothesis applies because $\cor(\lambda[i+1, j])$  and $\cor(\lambda[i, j+1])$ are both less than $\cor(\lambda[i,j])$. Therefore, $A[i+1,j] + A[i,j-1]$ and $A[i,j+1] + A[i-1,j]$ are in $\P$. 
\end{proof}

\noindent
Due to \cref{propagatem}, we can now state the main theorem of this section.

\begin{theorem}\label{thm:propking}
Let $\lambda$ be a partition, let $\B=\K(\lambda)$ be a game of \King{} and let $\ell=\D(\lambda)$. Then 
$$\G(\B)=\G(\B[1,1])=\G(\B[2,2])=\dots=\G(\B[\ell-1,\ell-1]).$$
\end{theorem}

To put things in perspective, to calculate the $\G$ value of a game of \King{}, by \cref{thm:propking} we can calculate the subpartition at the end of the diagonal, which turns out to be always be a gamma partition, which we know how to calculate by \cref{lem:ilicgama}.

\section{LCTR normal play}
\label{sec:LCTR}

In this section we focus on LCTR as described in \cref{sec:lctr}.
We start by showing how to obtain $\SG$ values of small games, namely for LCTR games of Durfee length at most three.
We postpone the analysis of computational complexity until \cref{sec:computing}.
\subsection{LCTR games of Durfee length at most two}

Let $\lambda$ be a partition and let $A=\L(\lambda)$. 
If $\D(\lambda)=0$, then  $\lambda=\br{}$ with $\SG(A)=0$.
The case when $\D(\lambda)=1$ is handled by \cref{lem:ilicgama}; see \cite{Ilic2019}.

If $\D(\lambda)=2$, then we can write
$\lambda=\br{\lambda_1,\lambda_2,2^a,1^b}$ where $\lambda_1\geq \lambda_2 \geq 2$, for some non-negative integers $a,b$.
We resolve $\SG(A)$ in two phases (also see \cref{fig:cut}):
\begin{enumerate}
    \item First we obtain values $\SG(A[2,0]),\SG(A[2,1]),\SG(A[0,2]),\SG(A[1,2])$.\label{it:two-row-games}
    \item Given those boundary values we resolve the remaining $\SG$ values, namely $\SG(A[1,0])$, $\SG(A[0,1])$, $\SG(A[1,1])$, and finally $\SG(A[0,0])=\SG(A)$.\label{it:remaining-4}
\end{enumerate}

To resolve the $\SG$ values mentioned in \cref{it:two-row-games}, we use the following lemma from \cite{Ilic2019}.

\begin{lemma}[one-row game]\label{lem:1rowgame}
Let $A = \L(\lambda)$ with $\lambda=\br{\lambda_1}$. 
Then 
$$\G(A)=\begin{cases}
0 & \text{if $\lambda_1=0$} \\
1 & \text{if $\lambda_1$ is odd} \\
2 & \text{otherwise}
\end{cases}.$$
\end{lemma}

Clearly one may obtain $\SG(A[2,1])$ and $\SG(A[1,2])$ by using \cref{lem:1rowgame} together with input partition $\br{a}$, or $\br{\lambda_2-2}$, respectively.
In the former case we used the fact that $\SG(\L(\br{a}))=\SG(\L(\br{1^a}))$, due to \cref{lem:transposeG}.

We write $\bcancel{b}$ for an arbitrary integer from the set $\{0,1,2\}\setminus \{b\}$. 
This allows us to simplify some computations such as $\mex{\bcancel{0}}{\bcancel{0}}=0$, or $\mex{0}{\bcancel{1}}=1$.

\begin{lemma}[two-row game]\label{lem:2rowgame}
Let $A = \L(\lambda)$ with  $\lambda=\br{\lambda_1,\lambda_2}$.
\begin{enumerate}
        \item \label{lem:2rowgame1}  If $\lambda_1=\lambda_2$ then $\G(A)=\begin{cases}
0 & \text{if } \lambda_1 \text{ is even} \\
2 & \text{if } \lambda_1 \text{ is odd}
\end{cases}.$\label{it:two-equal-rows}
        \item If $\lambda_1>\lambda_2>0$ then $\G(A)=\begin{cases}
0 & \text{if } \lambda_2 \text{ is odd} \\
1 & \text{if } \lambda_2 \text{ is even}
\end{cases}.$ \label{it:two-diff-rows}
    \end{enumerate}
\end{lemma}

\begin{proof}
Let $0\leq i \leq \lambda_2$.
Throughout this proof keep in mind that
$\G(A[1,i])=\G(\L(\br{i}))$ can be obtained due to \cref{lem:1rowgame}.

Now recall $\G(A[0,\lambda_1])=0$ and consider the case $\lambda_1=\lambda_2$. 
It thus follows that $\G(A[0,\lambda_2-1])=\mex{1}{0}=2$, and similarly $\G(A[0,\lambda_2-2])=\mex{2}{2}=0$.
In particular, those two $\G$ values alternate through subgames $A[0,j]$ where $0\le j<\lambda_2$, yielding the desired formula.

Finally consider the case $\lambda_1>\lambda_2>0$.
It follows from \cref{lem:1rowgame} that $\G(A[0,\lambda_2])=\bcancel{0}$ and thus $\G(A[0,\lambda_2-1])=\mex{1}{\bcancel{0}}=0$, and similarly $\G(A[0,\lambda_2-2])=\mex{2}{0}=1$.
In particular, those two $\G$ values alternate through subgames $A[0,j]$ where $0\le j<\lambda_2$, yielding the desired formula.
\end{proof}

\cref{lem:1rowgame,lem:2rowgame} can now be used to obtain the remaining values $\SG(A[2,0])$ and $\SG(A[0,2])$,   together with input partition $\br{a+b,b}$, or $\br{\lambda_1-2,\lambda_2-2}$, respectively.
In the former case we again used \cref{lem:transposeG} to obtain  $\SG(\L(\br{a+b,b}))=\SG(\L(\br{2^a,1^b}))$.

We now resolve the remaining $\SG$ values as announced in \cref{it:remaining-4}.
One may now use \cref{eq:recursion} to express those $\G$ values as 
\begin{align}
\SG(A[1,1])&=\mex{\SG(A[2,1])}{\SG(A[1,2])} & \SG(A[0,1])&=\mex{\SG(A[0,2])}{\SG(A[1,1])} \nonumber \\
\SG(A[1,0])&=\mex{\SG(A[2,0])}{\SG(A[1,1])} & \SG(A)&=\mex{\SG(A[0,1])}{\SG(A[1,0])}
\label{eq:remaining-4}
\end{align}

For simplicity, call the $\G$ values of the subgames $A[2,0],A[2,1]$ by $\alpha_{20},\alpha_{21}$ and the subgames $A[0,2],A[1,2]$ by $\alpha_{02},\alpha_{12}$, respectively. 
By definition, we may express the $\G$ value of game $A$ as 
\begin{align}
\SG(A)&=\mex{\mexx{\mexx{\alpha_{21}}{\alpha_{12}}}{\alpha_{02}}}{\mexx{\alpha_{20}}{\mexx{\alpha_{21}}{\alpha_{12}}}}.\label{eq:2x2game}
\end{align}

\begin{figure}
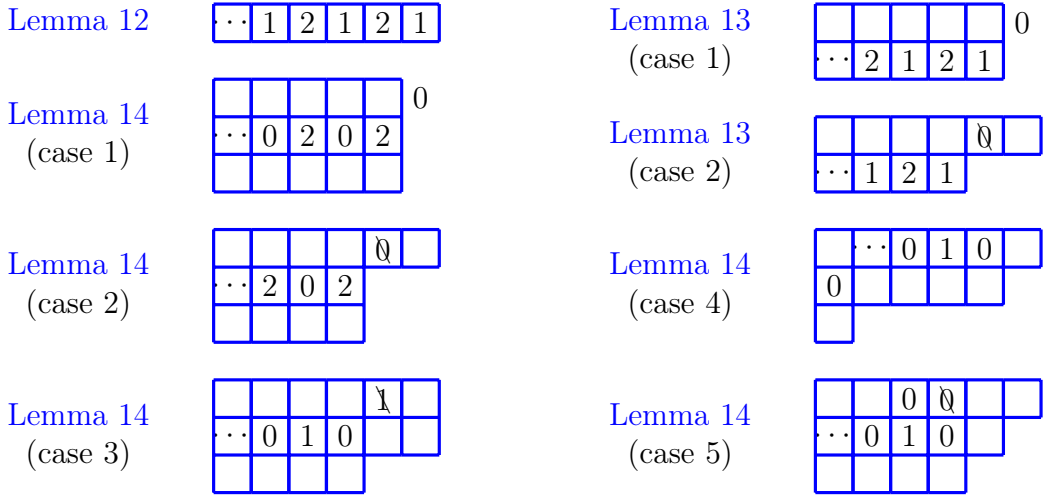

    \centering
    \picturecolumns
    \caption{Cases from \cref{lem:1rowgame,lem:2rowgame,lem:3rowgame}. The symbol $\cdots$ denotes alternating values from the two boxes  right from it.}
    \label{fig:columns}
\end{figure}

\subsection{LCTR games of Durfee length three}
Let $\lambda$ be a partition such that $\D(\lambda)=3$, and let $A=\L(\lambda)$. 
We  write
$\lambda=\br{\lambda_1,\lambda_2,\lambda_3,3^a,2^b,1^c}$ where $\lambda_1\geq \lambda_2 \geq \lambda_3 \geq 3$.
We again resolve $\SG(A)$ in the following two phases:
\begin{enumerate}
    \item  First we obtain values $\SG(A[3,0])$, $\SG(A[3,1])$, $\SG(A[3,2])$, $\SG(A[0,3])$, $\SG(A[1,3])$, $\SG(A[2,3])$.\label{it:three-row-games}
    \item Given the values from \cref{it:three-row-games} we resolve the remaining nine $\SG$ values, namely 
    \begin{align}
        &\SG(A[2,2]), &&\SG(A[1,2]), &&\SG(A[0,2]), \nonumber\\
        &\SG(A[2,1]), &&\SG(A[1,1]), &&\SG(A[0,1]), \label{it:remaining-9}\\
        &\SG(A[2,0]), &&\SG(A[1,0]), &&\SG(A[0,0])=\SG(A).\nonumber
    \end{align}
  
\end{enumerate}

To resolve the $\SG$ values mentioned in \cref{it:three-row-games} notice that values $\SG(A[3,1])$, $\SG(A[3,2])$, $\SG(A[1,3])$, and $\SG(A[2,3])$ may be obtained by using \cref{lem:1rowgame,lem:2rowgame}, together with partitions $\br{b+c,c},$ $\br{c},$ $\br{\lambda_2-3,\lambda_3-3}$, and $\br{\lambda_3-3}$, respectively.

To obtain $\SG(A[3,0])$ and $\SG(A[0,3])$ we introduce the following lemma.

\begin{lemma}[three-row game]\label{lem:3rowgame}
Let $A = \L(\lambda)$ with  $\lambda=\br{\lambda_1,\lambda_2,\lambda_3}$.
\begin{enumerate}
    \item \label{lem:3rowgame1} If $\lambda_1=\lambda_2=\lambda_3>0$ then
        $\G(A)=\begin{cases}
        0 & \text{if } \lambda_3\geq 3 \text{ and odd} \\
        1 & \text{if } \lambda_3=1 \text{ or }( \lambda_3\geq 4 \text{ and even})  \\
        2 & \text{if } \lambda_3=2 \\
        \end{cases}.$
        \item If $\lambda_1>\lambda_2=\lambda_3>0$ then 
        $\G(A)=\begin{cases}
        0 & \lambda_3 \text{ is odd}\\
        1 & \lambda_3 \text{ is even}\\
        \end{cases}.$
        \item If $\lambda_1=\lambda_2>\lambda_3>0$ then $\G(A)=
        \begin{cases}
        0 & \lambda_3 \text{ is even}\\
        1 & \lambda_3 \text{ is odd}\\
        \end{cases}.$
        \item if $\lambda_1>\lambda_2>\lambda_3=1$ then $\G(A)=
        \begin{cases}
        1 & \lambda_2 \text{ is even}\\
        2 & \lambda_2 \text{ is odd}\\
        \end{cases}.$
        \item if $\lambda_1>\lambda_2>\lambda_3>1$ then $\G(A)=
        \begin{cases}
        0 & \lambda_3 \text{ is even}\\
        1 & \lambda_3 \text{ is odd}\\
        \end{cases}.$
    \end{enumerate}

\end{lemma}

\begin{proof}
We resolve the above-mentioned five cases separately, relying heavily on \cref{lem:2rowgame}.
Recall from \cref{lem:2rowgame} that the $\G$ values of positions  $A[1,j]$ with $0\le j<\lambda_3$ alternate between $0$ and $2$ whenever $\lambda_2=\lambda_3$, or between $1$ and $2$ otherwise. 
Also observe that the value $\G(A[0,\lambda_3])$ can be obtained from either \cref{lem:1rowgame} or \cref{lem:2rowgame}.
\begin{enumerate}
    \item ($\lambda_1=\lambda_2=\lambda_3>0$):
    We have $\G(A[0,\lambda_3])=0$, while the $\G$ values of subgames $A[1,j]$ with $0\le j<\lambda_3$ alternate between $2$ and $0$, starting with $\G(A[1,\lambda_3-1])=2$.
    
    \item ($\lambda_1>\lambda_2=\lambda_3>0$):
    We have $\G(A[0,\lambda_3])=\bcancel{0}$ 
    while the $\G$ values of subgames $A[1,j]$ with $0\le j<\lambda_3$ alternate between $2$ and $0$, starting with $\G(A[1,\lambda_3-1])=2$.
    \item ($\lambda_1=\lambda_2>\lambda_3>0$):
    We have $\G(A[0,\lambda_3])=\bcancel{1}$, while the $\G$ values of subgames $A[1,j]$ with $0\le j<\lambda_3$ alternate between $1$ and $0$, starting with $\G(A[1,\lambda_3-1])=0$.
    \item ($\lambda_1>\lambda_2>\lambda_3=1$):
    We have $\G(A[0,1])=\G(\L(\br{\lambda_1-1,\lambda_2-1}))=\lambda_2 \bmod 2 $, while the $\G$ value of subgame $\G(A[1,0])=0$.
    \item ($\lambda_1>\lambda_2>\lambda_3>1$):
    Due to \cref{it:two-diff-rows} of \cref{lem:2rowgame} and
    \cref{cor:adjacentMex} we have $\G(A[1,\lambda_3-1])=0$, $\G(A[1,\lambda_3-2])=1$ and also $\G(A[0,\lambda_3-1])=\bcancel{0}$.
    This gives $\G(A[0,\lambda_3-2])=0$.
    The $\G$ values of subgames $A[1,j]$ with $0\le j\le\lambda_3-2$ alternate between $1$ and $0$, starting with $\G(A[0,\lambda_3-2])=0$.
\end{enumerate}
From the $\G$ values of subgames $A[1,j]$ with $0\le j\le \lambda_3-1$, we derive $\G$ values of subgames $A[0,j]$ with $0\le j\leq \lambda_3-1$ and obtain the desired result for each case.
\end{proof}

\cref{lem:3rowgame,lem:2rowgame,lem:1rowgame}, together with partitions $\br{\lambda_1-3,\lambda_2-3,\lambda_3-3}$ and $\br{a+b+c,b+c,c}$ resolve
$\SG(A[0,3])$ and $\SG(A[3,0])$, respectively.
The remaining nine Sprage-Grundy values mentioned in \cref{it:remaining-9} can, similarly as in \cref{eq:remaining-4}, be expressed as
\begin{align*}
\SG(A[2,2])&=\mex{\SG(A[2,3])}{\SG(A[3,2])}, & 
\SG(A[2,1])&=\mex{\SG(A[2,2])}{\SG(A[3,1])},  \\
\SG(A[2,0])&=\mex{\SG(A[2,1])}{\SG(A[3,0])}, & 
\SG(A[1,2])&=\mex{\SG(A[1,3])}{\SG(A[2,2])}, \\ 
\SG(A[1,1])&=\mex{\SG(A[1,2])}{\SG(A[2,1])}, &
\SG(A[1,0])&=\mex{\SG(A[1,1])}{\SG(A[2,0])}, \\ 
\SG(A[0,2])&=\mex{\SG(A[0,3])}{\SG(A[1,2])}, &
\SG(A[0,1])&=\mex{\SG(A[0,2])}{\SG(A[1,1])},  
\end{align*}
and finally
\begin{equation}
\SG(A)=\mex{\SG(A[0,1])}{\SG(A[1,0])}. \label{eq:remaining-9}
\end{equation}

\begin{figure}[b]
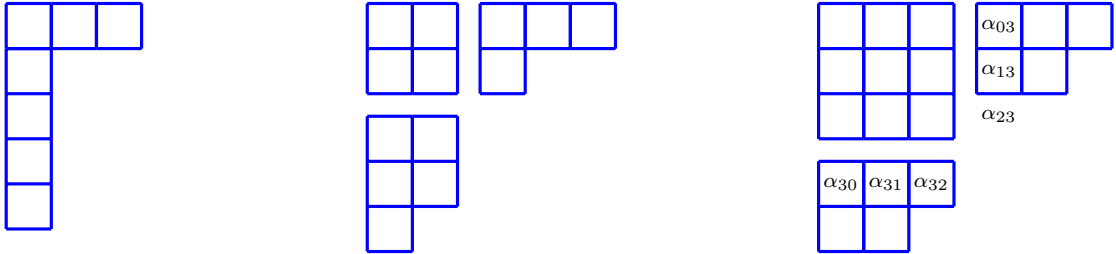

\centering
\picturecut
\caption{Resolving games played on $\br{3,1^4}$, $\br{5,3,2^2,1}$, and $\br{6,5,3^2,2}$. %
}\label{fig:cut}
\end{figure}

For simplicity, call the $\G$ values of the subgames $A[3,0],A[3,1],A[3,2]$ by $\alpha_{30},\alpha_{31},\alpha_{32}$ and the subgames $A[0,3],A[1,3],A[2,3]$ by $\alpha_{03},\alpha_{13},\alpha_{23}$, respectively. 
As in \cref{eq:2x2game}, we
express the $\G$ value of game $A$ as 
\begin{align}
{\mexx{\mexx{\alpha_{30}}{\mexx{\alpha_{31}}{\mexx{\alpha_{32}}{\alpha_{23}}}}}{\mexx{\mexx{\alpha_{31}}{\mexx{\alpha_{32}}{\alpha_{23}}}}{\mexx{\mexx{\alpha_{32}}{\alpha_{23}}}{\alpha_{13}}}}}\phantom{.} \quad \mexsymb \nonumber \\
{\mexx{\mexx{\mexx{\alpha_{31}}{\mexx{\alpha_{32}}{\alpha_{23}}}}{\mexx{\mexx{\alpha_{32}}{\alpha_{23}}}{\alpha_{13}}}}{\mexx{\mexx{\mexx{\alpha_{32}}{\alpha_{23}}}{\alpha_{13}}}{\alpha_{03}}}}. \quad \phantom{\star}\label{eq:3x3game}
\end{align}

\subsection{LCTR games of Durfee length at least four}
We now give a bit more complicated version of the propagation from \cref{propagatem}, which holds for LCTR normal play.

\begin{lemma}\label{propagate}
Let $\lambda$ be a partition, let $A=\L(\lambda)$ be a game of LCTR, and let $i,j\ge1$ be such that $\lambda[i,j]\neq \br{}$.
\begin{enumerate}
    \item If $\G(A[i,j])=0$, then $\D(\lambda[i,j])\geq 1$ and
    $\G(A[i-1,j-1])=0$. \label{propagate0}
    \item If $\G(A[i,j])=1$ and $\D(\lambda[i,j])\geq 2$, then  $\G(A[i-1,j-1])=1$. \label{propagate1}
    \item If $\G(A[i,j])=2$ and $\D(\lambda[i,j])\geq 3$, then  $\G(A[i-1,j-1])=2$. \label{propagate2}
\end{enumerate}
\end{lemma}

\begin{proof}
We have that  $\lambda[i',j']\neq \br{}$ for any choice of $0\le i'<\D(\lambda)$ and $0\le j'<\D(\lambda)$. In particular, for any corresponding subgame $A[i',j']$ we may use the first case of recursion in \cref{eq:recursion}.
We consider three cases depending on the value of  $\G(\B[i,j])$.

\begin{enumerate}
\item Suppose $\G(A[i,j])= 0$. 
    Due to \cref{cor:adjacentMex} we have
    \begin{align*}
        \G(A[i-1,j])\neq 0 \neq 
        \G(A[i,j-1]).
    \end{align*}
    Therefore  $\G(A[i-1,j-1])=\mex{A[i-1,j]}{A[i,j-1]}=\mex{\bcancel{0}}{\bcancel{0}}=0$.
    Since  $\lambda[i,j]\neq \br{}$, clearly $\D(\lambda[i,j])\geq 1$.
\item Suppose $\G(A[i,j])= 1$ and $\D(\lambda[i,j])\geq 2$, in particular neither $\lambda[i,j+1]$ nor $\lambda[i+1,j]$ is empty. 
    First notice that by \cref{cor:adjacentMex} we have
    \begin{align*}
        \G(A[i,j-1])&\neq  1 \neq \G(A[i-1,j]).
    \end{align*} 
    This implies $\G(A[i-1,j-1])=\mex{\bcancel{1}}{\bcancel{1}}\leq 1$.
 Now assume $\G(A[i-1,j-1])=0$. 
 From \cref{cor:adjacentMex,obs:max2} it follows that
 \begin{align*}
    &\G(A[i,j-1])=\G(A[i-1,j])=2, \text{ and}\\ &\G(A[i+1,j-1])=\G(A[i-1,j+1])=0.
  \end{align*}

 Since neither $\lambda[i,j+1]$ nor $\lambda[i+1,j]$ is empty, 
 by \cref{cor:adjacentMex} we have 
 \[\G(A[i+1,j]) = \G(A[i,j+1])=2.\]
This leads to $\G(A[i,j])=\mex{2}{2}\neq 1,$ a contradiction.
 \item Suppose $\G(A[i,j])= 2$ and  $\D(\lambda[i,j])\geq 3$%
 .
Due to  $\mex{2}{0}=1$ it follows that
$$
\{\G(A[i,j+1]),\G(A[i+1,j])\}=\{0,1\}.
$$
Since $\D(A[i,j])\geq 3$ we have that $\D(A[i,j+1])\geq 2$ and $\D(A[i+1,j])\geq 2$.
In addition, \cref{propagate0,propagate1} imply that
$$
\{\G(A[i-1,j]),\G(A[i,j-1])\}=\{0,1\}.
$$
Hence $\G(A[i-1,j-1])=\mex{0}{1}=2$. \qedhere
\end{enumerate}
\end{proof}

\noindent
The following corollary will be used in \cref{sec:computing}, for efficient computation of $\G$ values.

\begin{figure}[]
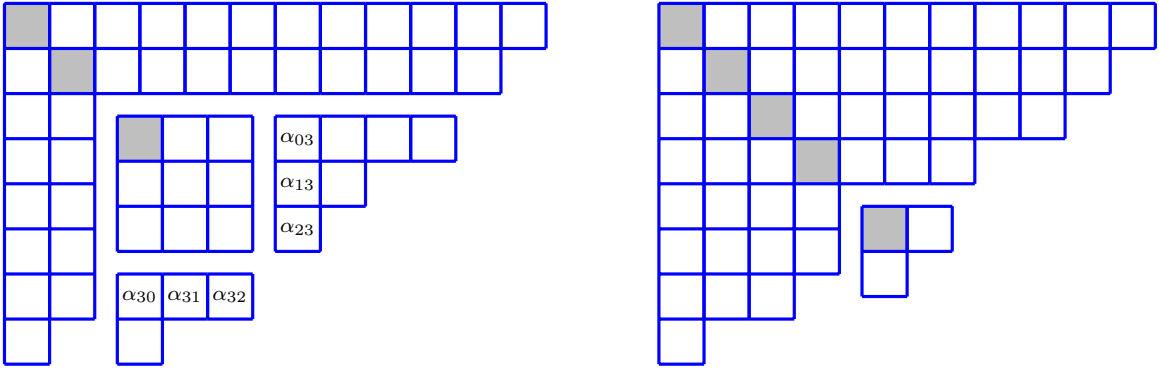

\centering
\picturebigcut
\caption{Resolving the games LCTR (left) and \King{} (right), played on $\lambda = \br{12,11,9,7,6,5,3,1}$. The gray boxes are filled with the same $\G$ values.}\label{fig:bigcut}
\end{figure}

\begin{corollary}\label{cor:propLCTR}
Let $\lambda$ be a partition, let $\ell = \D(\lambda)-3\geq 0$, and let $A= \L(\lambda)$. 
Then 
\[ \G(A)=\G(A[\ell,\ell]).\]
\end{corollary}

\begin{proof}
Let $\bar A=A[\ell,\ell]$.
Since $\ell \geq 0$ it follows that the position $\bar A$ is properly defined and we have $\D(\bar A)=3$. By \cref{propagate,obs:max2}, regardless of the value of $\G(\bar A)$ we  infer 
\begin{equation*}
\G(\bar A)=\G(A[\ell-1,\ell-1])=\cdots=\G(A[\ell-\ell,\ell-\ell])=\G(A). \qedhere
\end{equation*}
\end{proof}

In view of this corollary we restrict ourselves to the study of games with Durfee length at most 3.
As an application of this corollary we now compute $\G(\br{c^r})$.

\begin{lemma} \label{SGbox}
For positive integers $c$ and $r$, we have
$$ \G(\L(\rectangle_{r,c})) =
\begin{cases}
0 & \text{if } c > 1 \mbox{ and } r > 1 \mbox{ and } c+r \mbox{ is even} \\
2 & \text{if } c \leq 2 \mbox{ or } r \leq 2, \mbox{ and } c + r \mbox{ is odd}\\
1 & \text{otherwise}
\end{cases}.$$ 
\end{lemma}

\begin{proof}
By \cref{lem:transposeG} we can assume $c \geq r$, otherwise we can take its conjugate which has this property.
If $r = 1$, then the formula holds by \cref{lem:1rowgame}. If $r = 2$, then the result holds by \cref{lem:2rowgame1} of \cref{lem:2rowgame}. 
If $r\geq 3$ then by \cref{cor:propLCTR} the game $\G(\L(\rectangle_{c-r+3,3}))=\G(\L(\rectangle_{c,r}))$ and this game can be solved with \cref{lem:3rowgame1} of \cref{lem:3rowgame}.
\end{proof}

\section{The hierarchical classification of LCTR and \King{}\label{sec:hierarchy}}
In this section we classify the newly introduced games LCTR and \King{}, in terms of the hierarchy presented by Gurvich and Ho~\cite{GURVICH201854}.

\begin{lemma}\label{lem:zerotwo}
LCTR and \King{} do not admit $(0,2)$- and $(2,0)$-positions.
\end{lemma}
\begin{proof}
    We prove the statement for both games at the same time, since all the arguments are the same. 
        Consider a minimal counterexample $\lambda$. 
    We show that it is neither a $(0,2)$-, nor a $(2,0)$-position.
\begin{description}
    \item[Suppose  $\lambda$ is a $(2,0)$-position.]
    Notice that both moves are available because no $(2,0)$-position can be obtained by less then two moves. 
    Without loss of generality let $\lambda[1,0]$ be a $(0,\bcancel{0})$-position and let $\lambda[0,1]$ be a $(1,\bcancel{0})$-position. 
Due to minimality $\lambda[1,0]$ must be a $(0,1)$-position.

Notice that the move $\lambda[0,1] \to \lambda[1,1]$ is legal in both games. 
Otherwise, $\lambda[0,1] \to \lambda[0,2]$ is legal, and hence $\lambda[0,1]$ is a $(1,1)$-position. 
This implies that 
for $i>1$, $\lambda[0,i]$ alternates between $(0,0)$- and $(1,1)$-positions, contradicting the finiteness of $\lambda$.

Due to minimality, observe that  $\lambda[1,1]$ is a $(2,2)$-position, which implies that $\lambda[0,1]$ is a $(1,1)$-position.
Next, notice that $\lambda[0,2]$ is a $(0,0)$-position, 
due to $(1,1)$ being obtained as a mex of a $(2,2)$-position and this position.
But then we clearly have that $\lambda[1,2]$ is a $(1,1)$-position, because it is adjacent to both $(0,0)$- and $(2,2)$-positions. 
By extension, $\lambda[2,1]$ is a $(0,0)$-position,
due to $(2,2)$ being obtained as a mex of a $(1,1)$-position and this position.

Focusing on $\lambda[2,0]$, first notice that it must be a $(\bcancel{0},0)$-position, which by minimality implies that $\lambda[2,0]$ is a $(1,0)$-position.
This is a contradiction as its neighbour, $\lambda[2,1]$, is a $(0,0)$-position.
    \item[Suppose  $\lambda$ is a $(0,2)$-position.]
   For this case the proof is exactly the same, with swapped coordinates of all Sprague-Grundy pairs. 
\end{description}
\end{proof}

\begin{theorem}
LCTR and \King{} are domestic and  returnable, but neither tame nor forced. 
\end{theorem}
\begin{proof}
To see that the games are returnable,
suppose by contradiction there is a non-terminal $(0,1)$- position  $\lambda$ which does not lead to a $(1,0)$-position. 
But then it leads to a $(2,0)$ position, contradicting \cref{lem:zerotwo}. 
A symmetric argument shows that every $(1,0)$-position can move to a $(0,1)$-position. Thus it follows that the games are returnable.

To see that the games are domestic, first note that both $\SG$ and $\SG^-$ values cannot exceed $2$, thus 
it is enough to show $(0,2)$ and $(2,0)$-positions are impossible, which \cref{lem:zerotwo} guarantees.

They are not tame due to existence of $(2,1)$-positions. They are not forced by existence of moves from a $(1,0)$-position to a $(2,2)$-position. See \cref{fig:sgpairs}. 
\end{proof}

\begin{figure}[!h]
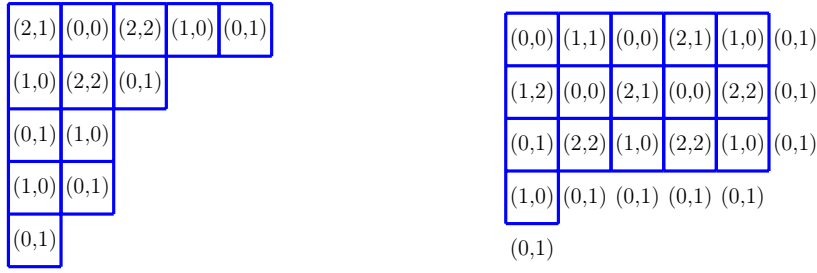

    \centering
    \picsgandmiseredr \hspace{6em}
    \picsgandmiserelctr
    \caption{SG pairs, \King{} on the left, LCTR on the right.}
    \label{fig:sgpairs}
\end{figure}

\section{The complexity of LCTR and \King{}\label{sec:computing}}

In this section we discuss computational aspects of both LCTR and \King{}. 
Let $n=\sum_{i=1}^r\lambda_i$, where $\lambda=\br{\lambda_1,\dots,\lambda_r}$ is the input partition for either game. 
Complexity results will be expressed in terms of $\lambda_1$, $r$, and $n$. In \cref{sec:timecomp}, we discuss the time complexity required to compute $\G$-values of LCTR and \King{}. In \cref{sec:treecomp}, we study the number of nodes and leaves in their game trees. In \cref{sec:statespace}, we examine the state-space complexity of these games. 

\subsection{Time complexity of computing \texorpdfstring{$\G$}{SG} values}\label{sec:timecomp}

The results of this section will be expressed in terms of the number of time units needed. We consider the \emph{time unit} to be any of the following: 
\begin{enumerate}
    \item \textbf{Reading an integer.} Note that our input partition  $\lambda=\br{\lambda_1,\dots,\lambda_r}$ consists of a sequence of $r$ integers, so reading the input takes $r$ time units. 
    \item \textbf{Basic arithmetic operations.} Here we mean integer multiplication or division by $2$, 
    addition or subtraction of the given integers, or computing the parity. 
    \item \textbf{Minimum excluded value:} We consider computing a minimal excluded value to be a basic operation which takes one time unit. Note that mex computations will only be performed with integers from the set $\{0,1,2\}$.
\end{enumerate}

For a given partition $\lambda=\br{\lambda_1,\lambda_2,\dots,\lambda_r}$ we show that the $\G$ value of both LCTR and \King{}, played on $\lambda$, can be determined  $O(\log(r)) \leq O(\log(n))$ time units. 
For comparison, the dynamic programming approach by \cite{Ilic2019} required $\Theta(n)$ time units for the same task.

\begin{lemma}\label{lem:durfee}
Given a partition $\lambda=\br{\lambda_1,\ldots,\lambda_r}\vdash n$ we can find its Durfee length $\D(\lambda)$ in $O(\log(r))\le O(\log(n))$ time units.
\end{lemma}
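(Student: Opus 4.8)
The plan is to exploit the monotone structure of the partition $\lambda = \br{\lambda_1, \ldots, \lambda_r}$ together with the characterization $\D(\lambda) = \max\{\ell \mid \lambda_\ell \ge \ell\}$. The key observation is that the predicate ``$\lambda_\ell \ge \ell$'' is monotone in $\ell$: if $\lambda_\ell \ge \ell$ then for any $k < \ell$ we have $\lambda_k \ge \lambda_\ell \ge \ell > k$, so the set $\{\ell \mid \lambda_\ell \ge \ell\}$ is an initial segment $\{1, 2, \ldots, \D(\lambda)\}$ of the positive integers (or empty, when $\lambda_1 = 0$, i.e. $\lambda = \br{}$). Hence finding $\D(\lambda)$ amounts to finding the boundary of this initial segment, which is a textbook binary search over the index set $\{1, \ldots, r\}$: probe the middle index $m$, compare $\lambda_m$ with $m$, and recurse on the left or right half accordingly.

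\textbf{Key steps.} First I would record the monotonicity claim above, since it is what makes binary search applicable. Second, note that $\D(\lambda) \le r$ trivially (no part with index exceeding $r$ exists), so the search range is $\{1, \ldots, r\}$ and has size at most $r$. Third, run binary search: maintain an interval $[\text{lo}, \text{hi}]$ known to contain the boundary, repeatedly halve it by one comparison $\lambda_m \gtrless m$ (one integer comparison, i.e. a basic arithmetic operation — a subtraction and a sign check), until the interval has length one. This performs $O(\log r)$ comparisons, each costing $O(1)$ time units under the model of \cref{sec:timecomp}, for a total of $O(\log r)$ time units. Fourth, observe $r \le n$ since $n = \sum_i \lambda_i \ge \sum_i 1 = r$ (all parts are positive), so $O(\log r) \le O(\log n)$, giving the stated bound. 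One should also remark that although reading the full input is $r$ time units, the binary search only \emph{accesses} $O(\log r)$ of the entries $\lambda_m$, so if the partition is given as an array supporting random access the Durfee length itself is computed in $O(\log r)$ time units without reading all of $\lambda$.

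\textbf{Main obstacle.} There is no deep obstacle here; the lemma is essentially ``binary search works.'' The only thing requiring a moment's care is the monotonicity argument that justifies the search, and the bookkeeping around boundary/degenerate cases: the empty partition $\lambda = \br{}$ (where $\D(\lambda) = 0$ and the set is empty), and making sure the search correctly returns $r$ itself when $\lambda_r \ge r$ (the whole index set satisfies the predicate). These are handled by initializing the search interval appropriately and checking the endpoints, all within $O(1)$ extra time units. I would also take care to state the cost model invocation precisely — each loop iteration is ``read $\lambda_m$'' plus ``compute $\lambda_m - m$ and its sign,'' which the model in \cref{sec:timecomp} counts as $O(1)$ time units — so that the $O(\log r)$ count is unambiguous.
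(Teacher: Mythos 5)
Your proposal is correct and follows exactly the paper's argument: both rely on the monotonicity of the predicate $\lambda_\ell \ge \ell$ and apply binary search over the index range $\{1,\ldots,r\}$, with $r\le n$ giving $O(\log r)\le O(\log n)$. You simply spell out the monotonicity justification and the degenerate cases, which the paper leaves implicit.
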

\begin{proof}
Recall that the Durfee length is the largest $i\in \{1,\ldots,r\}$ such that $\lambda_i\geq i$.
We can find such a value in $\log(r)$ time units by using binary search.
\end{proof}

\begin{lemma} \label{lem:conj}
Given a partition $\lambda=\br{\lambda_1,\ldots,\lambda_r}\vdash n$ we can compute its conjugate $\lambda'$ in 
$O(\lambda_1\log(r))\leq O(n\log(n))$
time units.
\end{lemma}

\begin{proof}
For every $i\in\{1,\ldots,\lambda_1\}$ we 
find $\lambda'_i$
via a binary search on the list $\br{\lambda_1,\ldots,\lambda_r}$ which takes $O(\log(r))$ time units each. 
Hence, in total we need $O(\lambda_1\log(r))$ time units.
\end{proof}

 \ifthenelse{\arx=1}{}{
The following result follows easily from \cref{thm:propking} and \cref{propagate}, which describe how $\G$-values propagate in the Young diagram of a partition. 
}

\begin{theorem}
Let $\lambda=\br{\lambda_1,\lambda_2,\dots,\lambda_r}$ be a partition of $n$. 
The $\G$ values of $\K(\lambda)$ and $\L(\lambda)$ can both be computed in $O(\log(r)) \leq O(\log(n))$ time units.
\end{theorem}

 \ifthenelse{\arx=1}{
\begin{proof}
We can compute the Durfee length $\D(\lambda)$ in $O(\log(r))$ time units due to \cref{lem:durfee}. We prove the statement for both games separately.

\paragraph{Proof for \King{}}
Let $\bar \lambda=\br{\bar \lambda_1, \bar \lambda_2, \ldots, \bar \lambda_{\bar r}}$ be a partition corresponding to the smallest “diagonal subgame,” i.e. $\B[\D(\lambda)-1,\D(\lambda)-1]=\K(\br{\bar \lambda_1, \bar \lambda_2, \ldots, \bar \lambda_{\bar r}})$.
Due to \cref{thm:propking} it is enough to compute $\G$ value of $\K(\bar \lambda)$, as it corresponds of that of $K(\lambda)$. 

To this end observe that, by construction, $\bar \lambda$ is of  shape $\gama_{a,b}$. 
Here $a=\lambda_{\D(\lambda)}-\D(\lambda)+1$, while integer $b$ can be computed via binary search in $O(\log r)$ time units.\footnote{Note that we never need to explicitly compute $\bar \lambda$, as this would cause the computational complexity to grow.}
It remains to verify, by \cref{lem:ilicgama}, that the $\G$ value of $\gama_{a,b}$ can be computed in $O(1)$ time units.
Summarizing the time complexity, the computation of $\G$ value of the game of \King{} requires $O(\log(r))$ time units. 

\paragraph{Proof for LCTR}
Let $A=\L(\lambda)$.
Due to \cref{lem:durfee}, we can compute the Durfee length $\D(\lambda)$ in $O(\log(r))$ time units. We distinguish three cases, depending on the Durfee length of $\lambda$:
\begin{itemize}
    \item \textbf{Case $\D(\lambda)=1$.} Notice that $\lambda$ is of gamma shape, and the corresponding $\G$ value can be computed in $O(1)$ time units due to \cref{lem:ilicgama}.
    \item \textbf{Case $\D(\lambda)=2$.} 
    Denote the lengths of the first two columns by $r$ and $j$, i.e. $j$ is the maximal integer such that $\lambda_j\ge2$.  
    We need $O(\log(r))$ time units to compute $j$ via binary search.
    
    We express $\G(A)$ as a function of $\alpha_{02}$, $\alpha_{20}$, $\alpha_{12}$, $\alpha_{21}$, where $\alpha_{ij}=\G(A[i,j])$. 
    We calculate those four values as a function of $\lambda_1,\lambda_2,r$ and $j$, by using \cref{eq:2x2game} where 
    \begin{align*}
        \alpha_{02}&=\G(\L(\br{\lambda_1-2,\lambda_2-2})) & \alpha_{20}&=\G(\L(\br{r-2,j-2}))\\
        \alpha_{12}&=\G(\L(\br{\lambda_2-2}))       & \alpha_{21}&=\G(\L(\br{j-2})).
    \end{align*}
    The computation of $\alpha_{02},\alpha_{20},\alpha_{12},\alpha_{21}$ can be done in $O(1)$ time units; see \cref{lem:1rowgame,lem:2rowgame}.
    Afterwards the application of \cref{eq:remaining-4} can be performed in constantly many time units, so the overall computation time is $O(\log(r))$ time units. 
    \item \textbf{Case $\D(\lambda)\ge 3$.} Finally, let $\bar \lambda=\br{\bar \lambda_1, \bar \lambda_2, \ldots, \bar \lambda_{\bar r}}$ be a partition so that $\bar \lambda=\lambda[\D(\lambda)-3,\D(\lambda)-3]$.
    We do not compute $\bar \lambda$ but whenever we check a value $\bar \lambda_\ell$ we instead use $\bar \lambda_\ell=\lambda_{\D(\lambda)+\ell-3}-\D(\lambda)+3$. 

    It is enough to compute $\G$ value of $\L(\bar \lambda)$ as it corresponds of that of $\L(\lambda)$, due to \cref{cor:propLCTR}. 

Let $\bar r, j$ and $i$ be the (possibly zero) lengths of the first three columns of $\bar \lambda$, respectively.
Formally,
let $\bar r,j$ and $i$ be maximum integers such that
$\bar \lambda_{\bar r} \ge 1$, and
$\bar \lambda_j \ge 2$, and 
$\bar \lambda_i \ge 3$,
 respectively.
We need $O(\log(r))$ time units to compute $\bar r,j$ and $i$.
The value of $\G(A)$ can now be computed in $O(1)$ time units by using \cref{eq:3x3game}, where 
\begin{align*}
\alpha_{03} &=\G(\L(\br{\bar \lambda_1-3, \bar \lambda_2-3, \bar \lambda_3-3})) & \alpha_{30}&=\G(\L(\br{\bar r-3,j-3,i-3}))\\
\alpha_{13} &=\G(\L(\br{\bar \lambda_2-3, \bar \lambda_3-3})) & \alpha_{31}&=\G(\L(\br{j-3,i-3}))\\
\alpha_{23} &=\G(\L(\br{\bar \lambda_3-3})) & \alpha_{32}&=\G(\L(\br{i-3})).\qedhere
\end{align*}
\end{itemize}
\end{proof}
}{}

\subsection{The game tree of LCTR and \King{}}\label{sec:treecomp}

In this section we describe the number of nodes and leaves that the game tree of LCTR, or \King{}, might have.
 \ifthenelse{\arx=1}{
\paragraph{The number of nodes in the game tree}
Recall that $\nodes(A)$, and $\leaf(A)$, denote the number of nodes, and leaves, in the game tree of the game $A$, respectively.
The following corollary connects those notions, and follows immediately from \cref{def:truncation}.
\begin{corollary}\label{cor:leaf+node}
Let $\lambda$ be a partition. Then $\nodes(\L(\lambda))=\nodes(\K(\lambda))+\leaf(\L(\lambda))$.
\end{corollary}
Given a game $A$ of either type, recall that a position may occur several times within the game tree of $A$. 
More precisely, if $A[i,j]\neq \br{}$ then the pair $(i,j)$ gives rise to exactly $\binom{i+j}{i}$ nodes within the game tree of $A$. 

The next result follows by induction and an application of inclusion-exclusion.

\begin{lemma} For positive integers $r,c$ we have  $\nodes(\K(\rectangle_{r,c}))=\binom{r+c}{r}-1$.
\end{lemma}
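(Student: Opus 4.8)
The plan is to count nodes in the game tree of $\K(\rectangle_{r,c})$ directly, exploiting the fact already noted in the excerpt that a subposition $A[i,j]\neq\br{}$ contributes exactly $\binom{i+j}{i}$ nodes to the game tree. First I would identify precisely which pairs $(i,j)$ correspond to legal, non-terminal game positions in $\K(\rectangle_{r,c})$. Since the board is the rectangle $\br{c^r}$, the subposition reached by $i$ down-moves and $j$ right-moves is $\rectangle_{r-i,c-j}$, which is a valid (non-corner) position exactly when $0\le i\le r-1$, $0\le j\le c-1$, and $(i,j)\neq(r-1,c-1)$ — the last exclusion being the single corner (terminal) position of \King{}. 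So the node count is
\[
\nodes(\K(\rectangle_{r,c}))=\sum_{i=0}^{r-1}\sum_{j=0}^{c-1}\binom{i+j}{i}\;-\;\binom{(r-1)+(c-1)}{r-1},
\]
where the subtracted term removes the contribution of the corner position, which is a leaf of the game tree but must still be accounted for once as a node — wait, I need to be careful here: the corner position \emph{is} a node of the game tree (it is a terminal node/leaf in \King{}), so it should be counted once, not removed. Let me reconsider: every pair $(i,j)$ with $0\le i\le r-1$, $0\le j\le c-1$ gives a genuine game position, including the corner at $(r-1,c-1)$ which appears $\binom{r+c-2}{r-1}$ times as a node. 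So actually
\[
\nodes(\K(\rectangle_{r,c}))=\sum_{i=0}^{r-1}\sum_{j=0}^{c-1}\binom{i+j}{i}.
\]

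The crux is then the binomial identity $\sum_{i=0}^{r-1}\sum_{j=0}^{c-1}\binom{i+j}{i}=\binom{r+c}{r}-1$. I would prove this by first applying the hockey-stick identity to the inner sum: $\sum_{j=0}^{c-1}\binom{i+j}{i}=\binom{i+c}{i+1}$ (summing along a diagonal of Pascal's triangle). Then the outer sum becomes $\sum_{i=0}^{r-1}\binom{i+c}{i+1}=\sum_{i=0}^{r-1}\binom{i+c}{c-1}$, and a second application of hockey-stick gives $\binom{r+c}{c}-1 = \binom{r+c}{r}-1$, where the $-1$ absorbs the missing $i=-1$ term $\binom{c-1}{c-1}=1$. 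Alternatively — and perhaps cleaner to present — I would give a bijective argument: $\binom{r+c}{r}$ counts lattice paths from $(0,0)$ to $(r,c)$ using unit right/up steps; such a path, other than the single path going straight up then straight right, say, can be classified by the coordinates $(i,j)$ of... hmm, this bijective route needs the right invariant, so I would likely just use the two hockey-stick applications, which are routine.

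The main obstacle — really the only subtle point — is getting the set of legal non-terminal positions exactly right, in particular whether the corner position $(r-1,c-1)$ is included in the node count and whether the edge cases $r=1$ or $c=1$ (where the board is a single row or column and the recursion in \cref{eq:recursion2} takes a different branch) are handled uniformly. For $r=1$: the positions are $\br{c},\br{c-1},\dots,\br{1}$ reached by $0,\dots,c-1$ right-moves, each occurring $\binom{j}{0}=1$ time, giving $c$ nodes, and indeed $\binom{1+c}{1}-1=c$. Similarly $c=1$ works by the symmetry of \cref{lem:transposeG} or direct check. So the single formula $\sum_{i=0}^{r-1}\sum_{j=0}^{c-1}\binom{i+j}{i}$ is valid in all cases, and the identity closes the proof. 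I would write the argument as: establish the sum (citing the "$(i,j)$ gives $\binom{i+j}{i}$ nodes" remark and describing the legal positions on a rectangle), then evaluate it via hockey-stick.
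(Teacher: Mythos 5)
Your proof is correct, but it takes a genuinely different route from the paper's. The paper argues by induction on $r+c$: after disposing of the cases $\min(r,c)=1$ (where the rectangle is a gamma shape), it sets up an inclusion--exclusion recurrence, $\nodes(\K(\rectangle_{r,c}))=\nodes(\K(\rectangle_{r-1,c}))+\nodes(\K(\rectangle_{r,c-1}))-\nodes(\K(\rectangle_{r-1,c-1}))+\binom{r+c-2}{r-1}$, and closes it with the induction hypothesis and Pascal's rule. You instead count nodes directly: on a rectangle, the legal positions of \King{} are exactly the pairs $(i,j)$ with $0\le i\le r-1$, $0\le j\le c-1$ (including the single terminal corner $(r-1,c-1)$, which is a leaf but still a node), each occurring $\binom{i+j}{i}$ times because every interleaving of $i$ down-moves and $j$ right-moves stays inside the diagram; then two applications of the hockey-stick identity evaluate $\sum_{i=0}^{r-1}\sum_{j=0}^{c-1}\binom{i+j}{i}=\binom{r+c}{r}-1$. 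Your initial hesitation about whether to subtract the corner's contribution is resolved correctly (the corner's $\binom{r+c-2}{r-1}$ occurrences are precisely the leaf count in \cref{tab:tree-parm}, a nice consistency check), and your handling of $r=1$ or $c=1$ falls out of the same formula rather than being a separate base case. The trade-off: your argument is a one-shot closed-form count, but it leans on the paper's (unproved, though easily justified) assertion that position $(i,j)$ gives rise to exactly $\binom{i+j}{i}$ nodes, and on standard binomial identities; the paper's induction uses that assertion only for the correction term $\binom{r+c-2}{r-1}$ in \cref{eq:reccur} and otherwise stays self-contained at the level of game trees, at the cost of having to guess the closed form in advance.
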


\begin{proof}
First observe that if $r=1$ or $c=1$, then $\rectangle_{r,c}=\gama_{r,c}$, thus the claim holds (also see \cref{tab:tree-parm}).  
The proof is by induction on $r+c$.
Since the base case was already covered, 
assume that the claim holds for all partitions $\rectangle_{r',c'}$ with $r'+c'<\ell$ and let $r+c=\ell$.
By the principle of inclusion-exclusion first observe the following recurrence of  $\nodes(\K(\rectangle_{r,c}))$, whenever $\min(r,c)>1$:
\begin{align}\nodes(\K(\rectangle_{r,c}))=\nodes(\K(\rectangle_{r-1,c}))+\nodes(\K(\rectangle_{r,c-1}))-\nodes(\K(\rectangle_{r-1,c-1}))+\binom{r+c-2}{r-1}. \label{eq:reccur}
\end{align}
By applying the induction hypothesis to \cref{eq:reccur} we obtain 

\begin{align*}\nodes(\K(\rectangle_{r,c}))&=\binom{r+c-1}{r-1}-1+\binom{r+c-1}{c-1}-1-\binom{r+c-2}{r-1}+1+\binom{r+c-2}{r-1} \\
&=\binom{r+c}{r}-1.\qedhere
\end{align*}
\end{proof}

\paragraph{Number of leaves in the game tree} 
The number of leaves in the game tree of a given position corresponds to the number of different games that can be played on that position. 
As expected, for both games this number can be exponential in $n$, for example for staircase partitions (see \cref{tab:tree-parm}).
Depending on the input partition, the number of leaves may be much smaller.
\cref{tab:tree-parm} also can be used to obtain the following result. 

\begin{lemma}\label{lem:minleaves}
Let $n$ be an integer and let $\lambda\vdash n$. 
The number of leaves in the game tree of $\K(\lambda)$ is at least $1$.
The number of leaves in the game tree of $\L(\lambda)$ is at least $n+1$.
Both lower bounds are tight.
\end{lemma}
 
\begin{proof}
Regarding the case of \King{}, it is enough to inspect the second row of \cref{tab:tree-parm}, where one may observe that this bound is achieved by %
$\br{n}$.
We now consider the case of LCTR.

We prove the claim by showing that among all of the partitions of $n>0$, the partition $\br{n}$ 
has the smallest number of leaves, in particular 
$\leaf(\L(\br{n}))=n+1$.
Consider now any partition $\lambda=\br{\lambda_1,\dots, \lambda_r}\vdash n$.
Let 
$\bar \lambda = \br{\lambda_1,\dots, \lambda_r-1}$ and let $\bar{\bar \lambda} = \br{\lambda_1+1, \ldots, \lambda_r-1}$ be a partition obtained from $\bar \lambda$ by adding one box to the first row.
 Now observe that since $\lambda\neq \br{}$
it follows that $\lambda[r-1,\lambda_r-1]=\br{1}$. 
The leaves of $\L(\bar \lambda)$ are the same as those of $\L(\lambda)$ except we remove both children (which are both leaves) of all the occurrences of $\L(\lambda)[r-1,\lambda_r-1]$ in the game tree, and all occurrences of $\L(\lambda)[r-1,\lambda_r-1]$ become leaves. 
Hence the number of leaves decreases by the number of occurrences of $\L(\lambda)[r-1,\lambda_r-1]$, in particular, the number of leaves decreases by at least one.
On the other hand notice that $\L(\bar{\bar \lambda})$ admits exactly one more leaf than $\L(\bar \lambda)$ since in the game tree of $\L(\bar \lambda)$ we replace the single occurrence of the leaf
$\lambda[0,\lambda_1]=\br{}$ with a node with two leaf children to get the game tree of $\L(\bar{\bar \lambda})$.
It follows that the number of leaves did not increase from $\lambda$ to $\bar{\bar \lambda}$ while $\lambda,\bar{\bar \lambda} \vdash n$.
Any partition $\lambda \vdash n $ minimizing the number of leaves can therefore be gradually transformed to $\br{n}$, without increasing the number of leaves.
It remains to observe, e.g. by \cref{tab:tree-parm,cor:leaf+node}, that $\br{n}$ indeed admits exactly $n+1$ leaves, as stated by the claim.
\end{proof}

The next lemma describes the number of leaves in the game tree of LCTR, when played on a rectangular partition. 

\begin{lemma}
For positive integers $r,c$ we have that  $\leaf(\L(\rectangle_{r,c}))=\binom{r+c}{r}$.
\end{lemma}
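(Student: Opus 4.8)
The plan is to prove the formula $\leaf(\L(\rectangle_{r,c}))=\binom{r+c}{r}$ by a double induction on $r+c$, mirroring the structure of the node-counting lemma above but tracking leaves instead. First I would record the base cases: when $r=1$ (or, by symmetry, $c=1$), the game tree of $\L(\rectangle_{1,c})$ is essentially a caterpillar — at each position $\br{k}$ one move goes to the terminal leaf $\br{}$ and one move goes to $\br{k-1}$ — giving $c+1=\binom{1+c}{1}$ leaves, consistent with \cref{lem:minleaves} and \cref{tab:tree-parm}. This handles $\min(r,c)=1$.

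For the inductive step with $\min(r,c)\ge 2$, I would set up the same inclusion-exclusion recurrence used for $\nodes$, but now for leaves. Every leaf of the game tree of $\L(\rectangle_{r,c})$ lies in the subtree reached by a first move to $\L(\rectangle_{r-1,c})$ or to $\L(\rectangle_{r,c-1})$; the overlap consists of leaves lying below a position reached by a sequence of moves that use at least one row-removal and at least one column-removal, and these are exactly the leaves of the copies of $\L(\rectangle_{r-1,c-1})$. Unlike the node count, there is no extra $\binom{r+c-2}{r-1}$ correction term here, since the root $\L(\rectangle_{r,c})$ itself is not a leaf (it has two moves, as $\min(r,c)\ge 2$), so no additional nodes need to be added back. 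This yields
\begin{align*}
\leaf(\L(\rectangle_{r,c}))&=\leaf(\L(\rectangle_{r-1,c}))+\leaf(\L(\rectangle_{r,c-1}))-\leaf(\L(\rectangle_{r-1,c-1})).
\end{align*}
Applying the induction hypothesis and Pascal's identity $\binom{r+c-1}{r-1}+\binom{r+c-1}{r}-\binom{r+c-2}{r-1}=\binom{r+c}{r}$ closes the argument.

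Alternatively, and perhaps cleaner, I would invoke \cref{cor:leaf+node}, which states $\nodes(\L(\lambda))=\nodes(\K(\lambda))+\leaf(\L(\lambda))$, together with the already-proved fact $\nodes(\K(\rectangle_{r,c}))=\binom{r+c}{r}-1$. This reduces the lemma to computing $\nodes(\L(\rectangle_{r,c}))$, i.e. the total number of nodes in the LCTR game tree on a rectangle, and showing it equals $2\binom{r+c}{r}-1$. That count, in turn, follows because the truncation of LCTR is \King{} on the same diagram (as noted after \cref{thm:truncation}): the LCTR game tree is obtained from the \King{} game tree by attaching, to every node corresponding to a position $\br{1}$ (a corner), one extra terminal leaf child for the "extra" move to $\br{}$, and also the root generates such moves — more carefully, $\leaf(\L(\lambda))$ counts exactly the leaves created by moves into $\br{}$. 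The main obstacle either way is bookkeeping the inclusion–exclusion overlap correctly: one must be sure that the set of leaves common to the two principal subtrees is precisely (with multiplicity, counting repeated positions as distinct nodes) the leaf set of the game tree of $\L(\rectangle_{r-1,c-1})$, using the identity \cref{eq:chainsubpartition} that any sequence of $i$ row-moves and $j$ column-moves reaches the same subposition. I would prefer the first, self-contained inductive route, stating the leaf recurrence explicitly and citing \cref{eq:chainsubpartition} for the overlap identification, then finishing with Pascal's rule.
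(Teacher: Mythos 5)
Your preferred inductive route contains a genuine error, and it is exactly at the step you flag as the ``main obstacle.'' In the game tree $T_p$ as defined in the paper, repeated positions are distinct nodes, so the two subtrees hanging off the root --- the one rooted at the copy of $\L(\rectangle_{r-1,c})$ and the one rooted at the copy of $\L(\rectangle_{r,c-1})$ --- are disjoint; no leaf lies in both, and there is no overlap to subtract. The correct recurrence is simply $\leaf(\L(\rectangle_{r,c}))=\leaf(\L(\rectangle_{r-1,c}))+\leaf(\L(\rectangle_{r,c-1}))$ (valid because in LCTR both moves are available from every non-empty position), and Pascal's rule closes the induction at once. The recurrence you propose, with the extra term $-\leaf(\L(\rectangle_{r-1,c-1}))$, is false: for $r=c=2$ it gives $3+3-2=4$, whereas the game tree of $\L(\br{2,2})$ has $6=\binom{4}{2}$ leaves. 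Correspondingly, the identity you invoke to finish, $\binom{r+c-1}{r-1}+\binom{r+c-1}{r}-\binom{r+c-2}{r-1}=\binom{r+c}{r}$, is not Pascal's identity and is false (again $4\neq 6$ at $r=c=2$), so the induction does not close. The inclusion--exclusion in the paper's node-count lemma that you are mirroring is of a different nature: it is performed over positions counted with multiplicity $\binom{i+j}{i}$ (sums over index rectangles), not over the two root subtrees, which is why an overlap term legitimately appears there but has no analogue in a subtree decomposition of the game tree.

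Your alternative sketch is closer to the paper's actual argument, which counts leaves via King-tree nodes: $\leaf(\L(\rectangle_{r,c}))=\nodes(\K(\rectangle_{r,c}))-\nodes(\K(\rectangle_{r-1,c-1}))+\binom{r+c-2}{r-1}$, because every King-tree node whose position is a single row or single column contributes one LCTR leaf (a move into $\br{}$), and each of the $\binom{r+c-2}{r-1}$ occurrences of the corner $\br{1}$ contributes a second one. But your bookkeeping of the truncation relation is off: the LCTR tree is obtained from the \King{} tree by attaching one extra leaf child to \emph{every} node at a single-row or single-column position and \emph{two} to every occurrence of $\br{1}$, not one extra child only at corners; and you never carry this out to an actual count of $\nodes(\L(\rectangle_{r,c}))$ or of the attached leaves. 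As written, neither route proves the lemma; the quickest repair is to delete the spurious overlap term from your recurrence and finish with Pascal's rule, which in fact yields a shorter proof than the paper's.
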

\begin{proof}
Similarly as above, $\min(r,c)=1$ implies $\rectangle_{r,c}=\gama_{r,c}$, so in those cases the claim can be easily verified (see also \cref{tab:tree-parm}).
For the remaining cases, by the principle of inclusion-exclusion we get
$$\leaf(\L(\rectangle_{r,c}))=\nodes(\K(\rectangle_{r,c}))-\nodes(\K(\rectangle_{r-1,c-1}))+\binom{r+c-2}{r-1}=\binom{r+c}{r},$$
where the first binomial symbol  corresponds to the fact that all $\binom{r+c-2}{r-1}$
occurrences of 
subposition $\L(\lambda)[r-1,c-1]$ give rise to two leaves.
\end{proof}
}{ Recall that $\nodes(A)$, and $\leaf(A)$, denote the number of nodes, and leaves, in the game tree of the game $A$, respectively.
The following corollary connects those notions, and follows immediately from \cref{def:truncation}.
\begin{corollary}\label{cor:leaf+node}
Let $\lambda$ be a partition. Then $\nodes(\L(\lambda))=\nodes(\K(\lambda))+\leaf(\L(\lambda))$.
\end{corollary}
Given a game $A$ of either type, recall that a position may occur several times within the game tree of $A$. 
More precisely, if $A[i,j]\neq \br{}$ then the pair $(i,j)$ gives rise to exactly $\binom{i+j}{i}$ nodes within the game tree of $A$.

The number of leaves in the game tree of a given position corresponds to the number of different games that can be played on that position. 
As expected, for both games this number can be exponential in $n$, for example for staircase partitions (see \cref{tab:tree-parm}).
Depending on the input partition, the number of leaves may be much smaller.
\cref{tab:tree-parm} also can be used to obtain the following result. 

\begin{proposition}\label{lem:minleaves}
The number of leaves in the game tree of $\K(\lambda)$ is at least $1$.
The number of leaves in the game tree of $\L(\lambda)$ is at least $n+1$.
Both lower bounds are tight.
\end{proposition}

}

\begin{table}[!ht]
    \centering
    \begin{tabular}{c|c|c|c}
         game $A$ & number of states & $\nodes(A)$  &  $\leaf(A)$ \\ \hline\hline
 $\L(\gama_{r,c}),\min(r,c)=1$ & $r+c$ & $2r+2c-1$ & $r+c$ \\
 $\K(\gama_{r,c}),\min(r,c)=1$  & $r+c-1$ & $r+c-1$ & $1$ \\
 $\L(\gama_{r,c}),\min(r,c)>1$ & $r+c-1$ & $2r+2c-1$ & $r+c$ \\
 $\K(\gama_{r,c}),\min(r,c)>1$ & $r+c-2$ & $r+c-1$ & $2$ \\ \hline
 $\L(\staircase_r)\vphantom{2^{2^{2^{2}}}}$ & $r+1$ & $2^{r+1}-1$ & $2^r$ \\
 $\K(\staircase_r)\vphantom{2^{2^{2}}}$ & $r$ & $2^{r}-1$ & $2^{r-1}$ \\ \hline
 $\L(\rectangle_{r,c})\vphantom{2^{2^{2^{2}}}}$ & $rc+1$  &  $2\binom{r+c}{r}-1$ & $\binom{r+c}{r}$ \\ 
 $\K(\rectangle_{r,c})\vphantom{2^{2^{2^{2}}}}$ & $rc$ & $\phantom{2}\binom{r+c}{r}-1$ & $\binom{r+c-2}{r-1}$ \\
    \end{tabular}
    \caption{Same games together with corresponding game tree parameters.
    }
    \label{tab:tree-parm}
\end{table}

For some simple families we present the corresponding values in \cref{tab:tree-parm}. 
The first two rows of \cref{tab:tree-parm} show the tightness of \cref{lem:minleaves}, i.e. they describe the asymptotically smallest number of nodes and leaves that the game tree of LCTR, or \King{}, may have. 
We believe that the number of leaves attained by the above lemma attains the maximal value whenever $r=c$, which leads to the following conjecture. 

\begin{conjecture}
The number of nodes in the game tree of any game of \King{} and LCTR on a partition of size $n^2$ is maximized in the partition $\rectangle_{n, n}$, i.e. the square partition.
    \end{conjecture}

\subsection{State-space complexity}\label{sec:statespace}
The state-space complexity of a game is the number of distinct game positions reachable from the initial position of the game. 
\begin{proposition}
Let $\lambda$ be a partition such that $\lambda\vdash n$.
Then the number of states of $\L(\lambda)$ and $\K(\lambda)$ is bounded above by $n+1$ and $n$, respectively.
On the other hand, the number of states of both $\L(\lambda)$ and $\K(\lambda)$ is bounded below by $\Omega(\sqrt{n})$.
\end{proposition}
\begin{proof}
Concerning both upper bounds, it is easy to notice that the number of positions is trivially bounded by the number of boxes of $\lambda$.
More precisely, the state space of any LCTR game may attain the bound of $n+1$, due to an additional position $\br{}$, while the state space of \King{} cannot exceed $n$. 
One can attain both upper bounds by considering, for instance, a gamma-shaped partition.

Regarding the lower bound, observe that no position in either game can be repeated, which implies that the number of states is at least one more than the height of the game tree.
Thus, 
in the case $\lambda=\staircase_r$ we have $n=\binom{r+1}{2}$ and 
$\K(\lambda)$ admits $r=\sqrt{1/4+2n}-1/2$ states, while
$\L(\lambda)$ admits $r+1$ states, cf. \cref{tab:tree-parm}, which matches the length of any game played on $\staircase_r$. 

Notice that any partition $\lambda \vdash n\geq {r+1 \choose 2}$ with $\lambda\neq \staircase_r$ has a game tree of height at least $r$ in \King{} and $r+1$ in LCTR.
\end{proof}

\section{Open Problems}
\label{sec:open}

In this section, we give some possible directions for future research.

\paragraph{Alternative representations of input partition:}
There are various ways to represent the input partition in a possibly more compact way. 
    For instance, one may use exponents to represent rows of same length (see \cref{sec:partitions}).
    A similar approach is by only storing two-dimentional positions of all corners of the given partition. 
    For example, the exponential and corner representations of $\br{6,5,4,4,4,2,2}$ are, respectively, 
    \[
    \br{6,5,4^3,2^2} \text{\quad and }\quad \{(6,1),(5,2),(4,5), (2,7)\}.
    \]
    
    If we assume exponential or corner representation of input partition, the computational difficulty of the problem may change nontrivially, as converting between the mentioned representations may take as much as $O(n)$ time units.
    
    On one hand, doing this can  speed up certain parts of computation of $\G$ values, while other difficulties may appear. 
    For instance, it is not clear to us whether one can always determine the Durfee length in $O(\log n)$ time units using exponential notation. 
    It would be interesting to see if the overall time complexity can be improved by assuming that the input partition is encoded using exponential notation. 

\paragraph{Multidimensional LCTR:}
Let us denote the tuples $v=(v_1,...,v_d)$ with $v_i\ge 1$ for all $i$ as \emph{points}.
    For two such points $v,v'$, we say that $v\le v'$ if $v_i\le v'_i$ for all $i$. 
    The position of $d$-dimensional LCTR is simply a finite set $S$ of non-comparable points, also called \emph{corners}.
    In $d$-dimensional LCTR on their turn a player chooses $i\in \{1,...,d\}$, and 
    modifies every $s\in S$ by decreasing $s_i$ by one
    if $s_i-1$ is positive, and removing $s$ from $S$ otherwise. 
    The game ends when $S$ is empty.
    
    In such setting it is easy to see that the Sprague-Grundy values may not exceed $d$, however zero values do not propagate, hence there is no hope for the similar algorithm to work.
    Is there a fast algorithm like in the case $d=2$? 
    
\paragraph{Impartial chess on an arbitrary partition}
    \emph{Impartial} chess is an impartial game played on a rectangular partition $\rectangle_{r,c}$ with a single chess-piece starting on the top-left most  position $\rectangle_{r,c}[0,0]$.
    The move from a position $\rectangle_{r,c}[i,j]$ to a position $\rectangle_{r,c}[i',j']$ is allowed if (i) it respects the movement rules of the regular game of chess, and (ii) $i\leq i'< r$, as well as $j\leq j'< c$. 
    
    The game of \King{} presented in this paper is very similar to  Impartial chess with a king.
    The two differences are: (i) the king cannot move diagonally, and (ii) the initial partition in \King{} need not be of rectangular shape. 
    Despite the similarity, the Impartial chess with a king seems to be much harder to understand when played on an arbitrary partition.
    On the other hand, Impartial chess with a rook is equivalent to two-pile Nim, while Impartial chess with a queen is equivalent to the Wythoff game \cite{wythoff1907modification}.
    
    Due to the above connections of Impartial chess with other impartial games, it would be interesting to study this game in the generalized setting where the starting position can be any partition.
    
\paragraph{Non-repetitive variant:}
The majority of the winning strategy of both LCTR and \King{} consist of mirroring the opponent's moves.
Suppose that we require that within every four consecutive moves, the number of $L$ moves is distinct from the number of $T$ moves. Does the game becomes significantly harder? 

\subsection*{Acknowledgements}

We are grateful to the anonymous referees whose suggestions improved the quality of our paper. 
Specifically, 
\cref{sec:LCTR misere} was enhanced while 
\cref{sec:hierarchy} was added
in light of \cite{GURVICH201854}. 

This work is supported in part by
an internal grant from Rhodes College as well as by the Slovenian Research And Innovation Agency (research program P1-0383, research projects N1-0160, J1-3003, J1-4008 and J5-4423, and the bilateral research project BI-US/22-24-164).

\bibliography{bibl}
\bibliographystyle{elsarticle-num}

\end{document}